\documentclass{article}
\usepackage[utf8]{inputenc}
\usepackage[english]{babel}
\usepackage{amsmath}
\usepackage{amsfonts}
\usepackage{mathrsfs}
\usepackage{amssymb}
\usepackage{dsfont}
\usepackage{amsthm}
\usepackage{hyperref,polynom}
\usepackage{xfrac}
\usepackage{defs}
\usepackage{fonts}
\usepackage{esint}
\usepackage{cite}
\usepackage{wrapfig} 
\usepackage[export]{adjustbox} 
\usepackage{todonotes}
\usepackage{mathtools} 
\usepackage{graphicx} 
\usepackage{array} 
\usepackage{enumerate} 
\usepackage{hyperref} 
\usepackage{setspace} 
\usepackage{lineno} 
\usepackage[margin=1in]{geometry}
\usepackage{setspace}
\setstretch{1.07}

\newtheorem{theorem}{Theorem}[section]
\newtheorem{proposition}[theorem]{Proposition}
\theoremstyle{definition}

\theoremstyle{plain}
\newtheorem{lemma}[theorem]{Lemma}
\newtheorem{corollary}{Corollary}[theorem]
\theoremstyle{remark}
\newtheorem{remark}[theorem]{Remark}

\title{A Fractional Korn-type inequality for smooth domains
{and a regularity estimate for nonlinear nonlocal systems of equations} \thanks{Support from NSF  DMS-1615726 is gratefully acknowledged.}}

\author
{Tadele Mengesha and James M. Scott}
\makeatletter
\def\namedlabel#1#2{\begingroup
    #2%
    \def\@currentlabel{#2}%
    \phantomsection\label{#1}\endgroup
}
\makeatother


\newcommand{\vphi}{\varphi}

\DeclareMathOperator*{\suppp}{supp}
\DeclareMathOperator*{\distt}{dist}
\newcommand{\intdm}[3]{\displaystyle \int_{#1} #2 \, \mathrm{d}#3}
\newcommand{\iintdm}[5]{\int_{#1} \int_{#2}  #3 \, \mathrm{d}#4 \, \mathrm{d}#5}

\newcommand{\intdmt}[4]{ \int_{#1}^{#2} #3 \, \mathrm{d}#4}
\newcommand{\iintdmt}[6]{ \int_{#1}^{#2} \int_{#3} #4 \, \mathrm{d}#5 \, \mathrm{d}#6}

\newcommand{\EquationReference}[2]{\mathrel{\overset{\makebox[0pt]{\mbox{\normalfont\tiny\sffamily #1}}}{#2}}}


\newcommand{\diffqbunorm}{\left| \big( \bu(\bx)-\bu(\by) \big) \cdot \frac{\bx-\by}{|\bx-\by|}\right|}

\newcommand{\dotbxy}{\frac{\bx-\by}{|\bx-\by|}}


\everymath{\displaystyle}

\begin{document}

\maketitle


\numberwithin{equation}{section}


\begin{abstract}
In this paper we prove a fractional analogue of the classical Korn's first inequality. The inequality makes it possible to show the equivalence of a function space of vector field characterized by a Gagliardo-type seminorm with {\em projected difference} with that of a corresponding fractional Sobolev space.   As an application, we will use it to obtain a Caccioppoli-type inequality for a nonlinear system of nonlocal equations, which in turn is a key ingredient in applying known results to prove a higher fractional differentiability result for weak solutions of  the nonlinear system of nonlocal equations. The regularity result we prove will demonstrate that a well-known self-improving property of scalar nonlocal equations will hold for strongly coupled systems of nonlocal equations as well.  
\end{abstract}

\section{Introduction and statement of main results}
For $d\geq 2$, suppose that $\Omega\subset\mathbb{R}^{d}$ is a bounded domain with $C^{1}$ boundary. For $s\in (0, 1)$ and $1<p<\infty$, define the space $\cX^{s}_{p}(\Omega)$ to be the closure of $[C_{c}^{1}(\Omega)]^{d}$ with respect to the norm $\|\cdot\|_{X^{s,p}}$ given by 
\[
\|\bu\|_{\cX^{s,p}}=  [\bu]_{\cX^{s,p}(\Omega)} + \Vnorm{\bu}_{L^p(\Omega)},\quad 
\]
where the seminorm $[\bu]_{\cX^{s,p}(\Omega)}$ is given by 
$
[\bu]_{\cX^{s,p}(\Omega)}^{p} := \iintdm{\Omega}{\Omega}{\frac{\diffqbunorm^p}{|\bx-\by|^{d+sp}}}{\by}{\bx}. 
$
The space $[C_{c}^{1}(\Omega)]^d$ denotes the set of continuously differentiable vector fields $\bu: \Omega\to \mathbb{R}^{d}$ whose support is compactly contained in $\Omega$. The seminorm $[\bu]_{\cX^{s,p}(\Omega)}^{p}$,  which is based on the size of the {\em projected difference} $\diffqbunorm$, is smaller  than the well-known Aronszajn-Slobodeckij-Gagliardo seminorm 
$
|\bu|_{W^{s,p}(\Omega)}^{p} = \iintdm{\Omega}{\Omega}{\frac{|\bu(\bx)-\bu(\by)|^p}{|\bx-\by|^{d+sp}}}{\by}{\bx} 
$
that uses the difference $|\bu(\bx)-\bu(\by)|$.  Each of these seminorms measure somewhat different things. Intuitively this can be seen from the simple Taylor's expansion that  for a given  smooth vector field $\bu$, while the difference $|\bu(\bx)-\bu(\by)|= |\nabla \bu (\bx) (\by-\bx)| + O(|\by-\bx|)$, the projected difference $\diffqbunorm = \left|\text{Sym}(\nabla \bu (\bx)) (\by-\bx)\cdot {\by-\bx\over |\by-\bx|}\right| + O(|\by-\bx|)$, where $\text{Sym}(\nabla \bu (\bx)) $ is the symmetric part of the gradient matrix defined as ${1\over 2}(\nabla \bu(\bx) + \bu(\bx)^{T})$.   

In this paper we establish connection between these seminorms $|\cdot|_{\cX^{s,p}(\Omega)}$ and  $|\cdot|_{W^{s,p}(\Omega)}$. In fact, motivated by the classical Korn's inequality which establishes the equivalence of  the seminorms $\|\nabla \bu\|_{L^{p}}$ and $\| \text{Sym}(\nabla \bu)\|_{L^{p}}$ for compactly supported vector fields, see \cite{CiarletPhilippeG2010OKi, doi:10.1137/1037123} for review, it is reasonable to ask whether this equivalence is true for $[\bu]_{\cX^{s,p}}$ and $|\bu|_{W^{s,p}(\Omega)}$. In the event $\Omega=\mathbb{R}^{d}$, this question was answered in the affirmative by the authors in \cite{MengeshaScott2018Korn}. For $p=2$ and $\Omega$ the half-space $\mathbb{R}^{d}_{+}$, an affirmative answer was  given earlier in \cite{MengeshaTadele2019FKaH}. Continuing that effort we prove in this paper that  the space $\cX^{s,p}(\Omega)$ is precisely $[W^{s, p}_0(\Omega)]^d$ with equivalent norms for sufficiently smooth domains. The function space $[W^{s, p}_0(\Omega)]^d$  is the closure of $[C_{c}^{1}(\Omega)]^d$ with respect the the norm $\|\cdot\|_{W^{s, p}}=|\bu|_{W^{s,p}(\Omega)} + \Vnorm{\bu}_{L^p(\Omega)}$.The main result is the following:

\begin{theorem}[Fractional Korn's Inequality for Bounded $C^1$ Domains]\label{thm:KornsC1}
Let $d \geq 2$, $s \in (0,1)$, $p \in (1,\infty)$ with $sp \neq 1$. Let $\Omega \subset \bbR^d$ be a bounded domain with $C^1$ boundary $\p \Omega$. Then there exists a constant $C=C(d,s,p,\Omega)$ such that for every $\bu \in \big[ C^1_c(\Omega) \big]^d$ 
\begin{equation}\label{eq:KornsInequalityC1}
|\bu|_{W^{s,p}(\Omega)} \leq C \left( [\bu]_{\cX^{s,p}(\Omega)} + \Vnorm{\bu}_{L^p(\Omega)} \right)\,.
\end{equation}
By density the inequality holds for all $\bu \in [W^{s,p}_{0}(\Omega)]^d$.
\end{theorem}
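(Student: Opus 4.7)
The plan is to reduce the inequality to two previously studied cases: the whole-space fractional Korn inequality of \cite{MengeshaScott2018Korn} and a half-space version that I will establish as a separate lemma. The reduction proceeds by a standard localization and boundary-flattening procedure. Since $\partial\Omega$ is $C^1$, I cover $\overline{\Omega}$ by finitely many open sets $U_0, U_1, \dots, U_N$ with $U_0 \subset\subset \Omega$, and each $U_k$ $(k\geq 1)$ centered at a boundary point and equipped with a $C^1$ diffeomorphism $\Psi_k$ that straightens $\partial\Omega$ locally, sending $U_k \cap \Omega$ onto a half-ball $B_1(\mathbf{0}) \cap \bbR^d_+$. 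Let $\{\varphi_k\}$ be a smooth partition of unity subordinate to this cover and write $\bu = \sum_k \varphi_k \bu$. A commutator-type estimate using the Lipschitz bound $|\varphi_k(\bx)-\varphi_k(\by)| \leq C |\bx-\by|$ and the local integrability $\int_{B_R(\by) \cap \Omega} |\bx-\by|^{p(1-s)-d}\,d\bx < \infty$ (which holds since $s<1$) yields
\[
[\varphi_k \bu]_{\cX^{s,p}(\Omega)} + |\varphi_k \bu|_{W^{s,p}(\Omega)} \leq C\bigl([\bu]_{\cX^{s,p}(\Omega)} + |\bu|_{W^{s,p}(\Omega)} + \|\bu\|_{L^p(\Omega)}\bigr),
\]
so it suffices to prove \eqref{eq:KornsInequalityC1} for each piece $\varphi_k\bu$ separately.

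The interior piece $\varphi_0 \bu$ has support at fixed distance $\delta_0 := \mathrm{dist}(U_0, \partial\Omega) > 0$. Zero-extending to $\bbR^d$, the additional contribution to $[\widetilde{\varphi_0 \bu}]_{\cX^{s,p}(\bbR^d)}$ from pairs in $\Omega \times (\bbR^d \setminus \Omega)$ satisfies $|\bx-\by| \geq \delta_0$ and is therefore bounded by $C(\delta_0, \Omega)\|\bu\|_{L^p(\Omega)}$; the whole-space Korn inequality of \cite{MengeshaScott2018Korn} then closes this case. For each boundary piece $\varphi_k \bu$ $(k \geq 1)$, I transplant via $\Psi_k^{-1}$ to $\hat{\bu}_k := (\varphi_k \bu) \circ \Psi_k^{-1}$, supported in $B_1(\mathbf{0}) \cap \bbR^d_+$. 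The transformation of $|\cdot|_{W^{s,p}}$ under a $C^1$ change of variables is classical; for the projected-difference seminorm one has to track the direction vector $\dotbxy$ carefully. Using the expansion $\Psi_k(\bx) - \Psi_k(\by) = D\Psi_k(\bx)(\bx-\by) + o(|\bx-\by|)$, the direction is conjugated by the approximate rotation $D\Psi_k(\bx)$ with error controlled by the modulus of continuity of $D\Psi_k$; the corresponding error terms are dominated by a Gagliardo seminorm on a thin boundary layer and can be absorbed by shrinking the diameter of $U_k$.

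This reduces the problem to a half-space fractional Korn inequality for general $p\in(1,\infty)$: for $\hat{\bu}\in[C_c^1(\bbR^d_+)]^d$,
\[
|\hat{\bu}|_{W^{s,p}(\bbR^d_+)} \leq C \bigl( [\hat{\bu}]_{\cX^{s,p}(\bbR^d_+)} + \|\hat{\bu}\|_{L^p(\bbR^d_+)} \bigr).
\]
I expect this half-space case to be the main obstacle. The case $p=2$ was settled in \cite{MengeshaTadele2019FKaH} by a Fourier/spherical-harmonic decomposition unavailable for general $p$. A natural $p$-independent replacement is a reflection argument: extend $\hat{\bu}=(\hat{\bu}', \hat{u}_d)$ across $\{x_d=0\}$ with the tangential components $\hat{\bu}'$ reflected evenly and the normal component $\hat{u}_d$ reflected oddly, then apply the whole-space inequality to the extended field. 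Unlike for the Gagliardo seminorm, the projected-difference seminorm on $\bbR^d$ picks up nontrivial cross contributions from pairs $(\bx,\by)$ on opposite sides of $\{x_d=0\}$. Bounding these cross terms by $[\hat{\bu}]_{\cX^{s,p}(\bbR^d_+)}$, up to a Hardy-type boundary remainder that is then absorbed by the $L^p$ norm (and a low-order Sobolev term), will require an algebraic identity exploiting the precise parity of the reflection. This is also where the hypothesis $sp\neq 1$ naturally enters, corresponding to the non-critical trace regime for the normal component.
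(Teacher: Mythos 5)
The high-level skeleton of your argument — cover $\overline{\Omega}$, handle the interior piece by zero-extension and the whole-space Korn inequality of \cite{MengeshaScott2018Korn}, handle the boundary pieces by local change of variables and reduction to a model domain — does match the paper's plan. But two pieces of your reduction are not merely unfinished; they are pointed in the wrong direction.

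First, the half-space lemma you identify as ``the main obstacle'' cannot be settled by the even/odd reflection you propose. Write $\bx\in\bbR^d_+$, $\by=(\by',-y_d)$ with $y_d>0$ and $\by^+=(\by',y_d)$. With the tangential components extended evenly and $u_d$ oddly, the cross-term numerator becomes $\bigl(\bu'(\bx)-\bu'(\by^+)\bigr)\cdot(\bx'-\by') + \bigl(u_d(\bx)+u_d(\by^+)\bigr)(x_d+y_d)$. The normal part is a \emph{sum}, not a difference, and there is no ``algebraic identity exploiting parity'' that re-expresses it in terms of $\cD(\bu)(\bx,\by^+)$ plus a controllable remainder. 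The authors say this outright in Section~\ref{sec:epigraph}: the commonly used reflection does not preserve the seminorm $[\cdot]_{\cX^{s,p}}$. The device that works is Nitsche's: extend by a \emph{linear combination of two differently scaled reflections}, $[E(\bu)]_i = k\,u_i^{\lambda}+\ell\,u_i^{\mu}$ for $i<d$ and $[E(\bu)]_d = m\,u_d^{\lambda}+n\,u_d^{\mu}$ with $\lambda,\mu>0$, $k+\ell=m+n=1$, $\lambda k=-m$, $\mu\ell=-n$. These relations make the troublesome cross-terms telescope into a single expression of the form $(k-m)\bigl(y_d-f(\bx')\bigr)\bigl(u_d(\Phi_\lambda(\bx))-u_d(\Phi_\mu(\bx))\bigr)$ (term $\mathrm{II}_3$ in the paper), which is then controlled by a fractional Hardy inequality from \cite{MengeshaTadele2019FKaH} — this is where $sp\ne 1$ enters, not through a trace theorem for the normal component as you speculate. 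Your proposal has the right instinct (a Hardy-type remainder appears) but is missing the central construction.

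Second, the paper never straightens $\partial\Omega$ to a flat half-space with a general $C^1$ diffeomorphism. It uses only rigid motions $T_j$ (rotation $+$ translation), which commute exactly with the projected-difference structure since $R_j^\intercal R_j=\mathbb{I}$ — see the computation in \eqref{eq:BoundaryEstimate6} — and then proves the Korn inequality directly on an \emph{epigraph with small Lipschitz constant} (Theorem~\ref{thm:KornsGraph}), a smallness made available by the $C^1$ hypothesis after shrinking $r_j$. This sidesteps the commutator errors your flattening introduces; your plan to ``absorb'' those errors by shrinking $U_k$ is circular as stated, because the error term you describe is precisely a Gagliardo seminorm of $\bu$ on $U_k\cap\Omega$, which is the quantity being estimated, and you have not produced a factor that tends to zero multiplying it. The paper's way of engineering such a small factor is the explicit Lipschitz-constant multiplier $\rmM$ on $\Vnorm{\bu}_{W^{s,p}(D)}$ in Proposition~\ref{ext-operator}, which in turn rests on the Nitsche extension and the Hardy inequality — the two ingredients your proposal lacks.
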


We emphasize that this work focuses on vector fields that vanish on the boundary of the domain. As such the fractional Korn's inequality stated in the above theorem can be thought of as a fractional analogue to the classical Korn's first inequality.  The more interesting question of whether $[W^{s,p}(\Omega)]^d=\{\bu\in [L^{p}(\Omega)]^d: |\bu|_{W^{s,p}} < \infty\}$ is equal to the space $\{\bu\in [L^{p}(\Omega)]^d: [\bu]_{\cX^{s,p}} < \infty\}$ is unanswered here. We believe that a properly  quantified notion of trace on boundary for vector fields in  $\{\bu\in [L^{p}(\Omega)]^d: [\bu]_{\cX^{s,p}} < \infty\}$, which we lack now, is the first step in showing the equality of the spaces. 
We also do not make any remark on sufficient conditions on a radial kernel $\rho$ and the domain $\Omega$ such that the general function space $\cS^{p}_\rho(\Omega) =\{\bu\in [L^{p}(\Omega)]^d: [\bu]_{\cS^{p}_\rho} < \infty\}$ where $[\bu]_{\cS^{p}_\rho}^{p}= \int_\Omega \int_\Omega \rho(\by-\bx)\diffqbunorm^{p} \, \rmd \by \, \rmd \bx$
 is equal to the space of vector fields $W^{p}_{\rho}(\Omega)=\{\bu\in [L^{p}(\Omega)]^d: \int_\Omega \int_\Omega \rho(\by-\bx)|{\bf u}(\by) - {\bf u}(\bx)|^{p} \, \rmd \by \, \rmd \bx < \infty\}$.  If $\rho$ is locally integrable, it is known that each of these spaces coincide with $[L^{p}(\Omega)]^{d}$ \cite{mengesha2015VariationalLimit}.  However, for non-integrable kernels the spaces are proper subsets of $[L^{p}(\Omega)]^{d}$. In fact, under extra assumptions that insure  singularity of the kernel $\rho$, the compact embedding of the spaces $\cS^{p}_\rho(\Omega)$ and $W^{p}_\rho(\Omega)$ in  $[L^{p}(\Omega)]^{d}$ is proved in \cite{Du-Mengesha-Tian-arxiv}  and \cite{BBM, Ponce2004}  respectively. 
We  note that the spaces $\cX^{s,p}(\Omega)$ and $[W^{s,p}_0(\Omega)]^d$ are special subspaces that correspond to the fractional kernel $\rho(|{\bz}|) = |{\bz}|^{-(d+ps)}.$

The proof of the theorem follows standard procedures where we first prove the same result for epigraphs and use  a partition of unity to localize near the boundary of the domain $\partial \Omega$. To that end, let $f : \bbR^{d-1} \to \bbR$ be a $C^{1}$ globally Lipschitz function, with $f(0)=0$ and $\grad f(0) = 0$. 
We say that an open subset $D$ of $\bbR^{d}$ is an \textit{epigraph} supported by $f$ if 
\[
D=\{ (\bx',x_d) \in \bbR^d \, : \, x_d > f(\bx') \}.
\]
The boundary $\partial D$ of the epigraph $D$ is precisely the graph of the function $x_d=f(\bx')$. For a given globally Lipschitz function $f$ as above we denote its Lipschitz constant by $\rmM := \Vnorm{\grad f}_{L^{\infty}(\bbR^{d-1})}$. 
\begin{theorem}[Fractional Korn's Inequality for epigraphs]\label{thm:KornsGraph}
Let $d \geq 1$, $s \in (0,1)$, $p \in (1,\infty)$ with $sp \neq 1$. Then there exists a constant $\rmM_0 > 0$ depending only on $d$, $s$, and $p$ with the following property: for any epigraph determined by $f$ with Lipschitz constant $\rmM < \rmM_0$, there exists a constant $C = C(d,s,p,\rmM_0)$ such that for every $\bu \in \big[ C^1_c(D) \big]^d$, 
\begin{equation*}
|\bu|_{W^{s,p}(D)} \leq C [\bu]_{\cX^{s,p}(D)}\,.
\end{equation*}
\end{theorem}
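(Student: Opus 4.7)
The plan is to reduce the problem on the epigraph to the full-space Fractional Korn inequality already established in \cite{MengeshaScott2018Korn}. This proceeds in two stages: first, straighten $D$ to the half-space $\bbR^d_+$ via a bilipschitz diffeomorphism; second, extend the straightened field by zero to all of $\bbR^d$ and apply the full-space result. The smallness of the Lipschitz constant $\rmM$ is what permits absorbing the perturbation generated in the first stage.

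For stage one, consider the map $\Psi : \bbR^d_+ \to D$ defined by $\Psi(\bfxi', \xi_d) := (\bfxi', \xi_d + f(\bfxi'))$, which has unit Jacobian and whose Lipschitz constant (along with that of its inverse) is bounded by $1 + C\rmM$. Set $\tilde{\bu} := \bu \circ \Psi \in [C^1_c(\bbR^d_+)]^d$. A direct change of variables gives $|\bu|_{W^{s,p}(D)} \leq (1+C\rmM)^{(d+sp)/p}\, |\tilde{\bu}|_{W^{s,p}(\bbR^d_+)}$. The projected seminorm transforms less cleanly, because the unit direction in the projection is not preserved; however, since $\Psi(\bfxi) - \Psi(\bfeta) - (\bfxi - \bfeta)$ has size bounded by $C\rmM\,|\bfxi - \bfeta|$, the unit vectors $\tfrac{\Psi(\bfxi) - \Psi(\bfeta)}{|\Psi(\bfxi) - \Psi(\bfeta)|}$ and $\tfrac{\bfxi - \bfeta}{|\bfxi - \bfeta|}$ differ by $O(\rmM)$. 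Splitting the inner product and applying Minkowski's inequality in the weighted $L^p$-space with weight $|\bfxi - \bfeta|^{-(d+sp)}$ yields
\[
[\tilde{\bu}]_{\cX^{s,p}(\bbR^d_+)} \leq (1+C\rmM)^{(d+sp)/p}\, [\bu]_{\cX^{s,p}(D)} + C\rmM\, |\tilde{\bu}|_{W^{s,p}(\bbR^d_+)}.
\]

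Stage two is to establish the half-space Korn inequality $|\tilde{\bu}|_{W^{s,p}(\bbR^d_+)} \leq C_0 [\tilde{\bu}]_{\cX^{s,p}(\bbR^d_+)}$ with $C_0 = C_0(d,s,p)$. Extend $\tilde{\bu}$ by zero to $\bbR^d$ (valid because its support is compactly contained in the open upper half-space) and apply the full-space Korn. The $W^{s,p}$ seminorm respects this extension trivially, while the projected seminorm on $\bbR^d$ exceeds that on $\bbR^d_+$ by twice the cross term
\[
J := \iintdm{\bbR^d_+}{\bbR^d_-}{\frac{\big|\tilde{\bu}(\bx) \cdot \dotbxy\big|^p}{|\bx-\by|^{d+sp}}}{\by}{\bx},
\]
which must be controlled by $[\tilde{\bu}]_{\cX^{s,p}(\bbR^d_+)}^p$. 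The plan is to use the normal reflection $\by \mapsto (\by', -y_d)$ combined with a fractional Hardy-type inequality on the half-space---valid exactly when $sp \neq 1$---to control the pointwise values of the relevant component of $\tilde{\bu}$ near the boundary by the projected seminorm.

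Chaining the two stages gives $|\tilde{\bu}|_{W^{s,p}(\bbR^d_+)} \leq C_0 (1+C\rmM)^{(d+sp)/p} [\bu]_{\cX^{s,p}(D)} + C_0 C\rmM\, |\tilde{\bu}|_{W^{s,p}(\bbR^d_+)}$; choosing $\rmM_0 = \rmM_0(d,s,p)$ so that $C_0 C \rmM_0 < \tfrac{1}{2}$ allows the last term to be absorbed into the left-hand side, and one concludes $|\bu|_{W^{s,p}(D)} \leq C(d,s,p,\rmM_0)\, [\bu]_{\cX^{s,p}(D)}$. The main obstacle is the estimate of $J$ in stage two: the full-space Korn is insensitive to boundary behavior, and the Hardy-type control of the near-boundary normal values of $\tilde{\bu}$ by the projected seminorm is the delicate step, where the borderline exponent $sp = 1$ must be excluded.
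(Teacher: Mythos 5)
Your overall scaffold (straighten $D$ to the half-space, extend to $\bbR^d$, apply the full-space Korn inequality of \cite{MengeshaScott2018Korn}, absorb a term of size $\rmM$) matches the paper's, and your stage-one straightening estimate is essentially the content of Lemma~\ref{lma:StraighteningOfOmega}. The gap is in stage two, precisely at the step the paper flags as the delicate one. You extend $\tilde\bu$ by zero across $\partial\bbR^d_+$ and must then control the cross term
\[
J = \iintdm{\bbR^d_+}{\bbR^d \setminus \bbR^d_+}{\frac{\left|\tilde\bu(\bx)\cdot\dotbxy\right|^p}{|\bx-\by|^{d+sp}}}{\by}{\bx}\ \sim\ \intdm{\bbR^d_+}{\frac{|\tilde\bu(\bx)|^p}{x_d^{sp}}}{\bx}
\]
by $[\tilde\bu]_{\cX^{s,p}(\bbR^d_+)}^p$. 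The Hardy-type inequality you invoke, though, is the one for the Gagliardo seminorm, i.e. $\int_{\bbR^d_+}|\tilde\bu|^p x_d^{-sp}\,\rmd\bx \leq C\,|\tilde\bu|_{W^{s,p}(\bbR^d_+)}^p$. That bound is useless here: it has the quantity you are trying to estimate on its right-hand side, making the argument circular. What you actually need is a Hardy inequality whose right-hand side is the \emph{projected-difference} seminorm $[\tilde\bu]_{\cX^{s,p}(\bbR^d_+)}^p$, and that inequality is not a known elementary fact---it is essentially as strong as the half-space Korn inequality itself, since the latter together with the ordinary Hardy inequality would immediately imply it, and conversely. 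You give no independent argument for it. Observe also that feeding the zero extension into the full-space Korn inequality does not help: it gives $|\tilde\bu|_{W^{s,p}(\bbR^d_+)}^p + 2J_W \leq C\big([\tilde\bu]_{\cX^{s,p}(\bbR^d_+)}^p + 2J\big)$ with $J\sim J_W$, and the constant $C>1$ prevents the cross terms from cancelling.

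The paper avoids this trap by explicitly \emph{not} extending by zero. Instead (Proposition~\ref{ext-operator}) it uses a Nitsche-type extension: on $D_-$, $\mathrm E(\bu)$ is a two-parameter linear combination $k\,\bu^\lambda + \ell\,\bu^\mu$ (tangential) and $m\,u_d^\lambda + n\,u_d^\mu$ (normal), with the constants tied together by $k+\ell=1=m+n$, $\lambda k=-m$, $\mu\ell=-n$. These algebraic relations are what produce the cancellations needed so that the dangerous contribution becomes a \emph{difference} $u_d(\bx',f(\bx')+\lambda x_d) - u_d(\bx',f(\bx')+\mu x_d)$, which is exactly the quantity amenable to the fractional Hardy inequality of \cite[Lemma~4.1]{MengeshaTadele2019FKaH} (that lemma estimates $\int_{\bbR^d_+}|\bF_\lambda(\bv)-\bF_\mu(\bv)|^p x_d^{-sp}\,\rmd\bx$ by $[\bv]_{\cX^{s,p}(\bbR^d_+)}^p$---a difference, not $|\bv|$ itself). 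So the two-parameter reflection is not an optional refinement: it is the mechanism that turns an intractable Hardy integral into a tractable one. Your proposal needs to either supply a direct proof of the $\cX^{s,p}$-sided Hardy inequality for $|\tilde\bu|$, or switch to a cancellation-producing extension along the lines the paper uses.
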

We will prove the fractional Korn’s inequality for an epigraph $D$
by first extending the vector fields in $\cX^{s,p}(D)$ to be 
defined in the whole space in such a way that the extended functions belong to
$\cX^{s,p}(\bbR^{d})$ and their  seminorm is controlled by the norm on the epigraph.  Once we establish that we can then apply the fractional Korn’s inequality for vector fields defined on all of $\bbR^d$ that is proved in \cite{MengeshaScott2018Korn}. Note that extending functions with proper control of their norms is a nontrivial task as the commonly used reflection across
the boundary of $D$ would not be preserving the seminorm $[\cdot]_{s,p}$. Nor would extending by zero be appropriate, since it is
not clear how to control the norm of the extended function. We instead use an
extension operator that has been used by Nitsche in \cite{nitsche1981korn} in his simple proof of
Korn’s second inequality along with the fractional Hardy-type inequality proved in \cite{MengeshaTadele2019FKaH} to show the boundedness of the
extension operator with respect to the seminorm $[\cdot]_{\cX^{s,p}}$. The precise statement is stated as follows. 
\begin{proposition}\label{ext-operator}
There exists a universal constant $\rmM_0 > 0$  such that 
for any epigraph $D$ supported by $f$ whose Lipschitz constant $M<\rmM_0$, there exists a bounded  extension operator
\begin{equation*}
\mathrm{E} : \big[ C^1_c(D) \big]^d \to \big[ C_c^{1}(\bbR^d) \big]^d
\end{equation*}
with the property that for every $\bu \in \big[ C^1_c(D) \big]^d$, 
\begin{equation}\label{eq:ExtensionBound}
\Vnorm{\mathrm{E}(\bu)}_{\cX^{s,p}(\bbR^d)} \leq C\left( \Vnorm{\bu}_{\cX^{s,p}(D)} + \rmM\Vnorm{\bu}_{W^{s,p}(D)} \right)\,,
\end{equation}
where the constant $C$ depends only on $\rmM_0$, $d$, $s$, and $p.$ 
\end{proposition}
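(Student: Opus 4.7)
The plan is to construct $\mathrm{E}$ via a Nitsche-style reflection across $\partial D$ and then bound the $\cX^{s,p}(\bbR^d)$-norm by partitioning $\bbR^d \times \bbR^d$ into the four regions $D \times D$, $D \times D^c$, $D^c \times D$, and $D^c \times D^c$. Define the reflection $\phi : \bbR^d \to \bbR^d$ by $\phi(\bx', x_d) := (\bx', 2 f(\bx') - x_d)$, which interchanges $D$ and $\bar{D}^c$ and fixes $\partial D$; for small enough $\rmM$, the map $\phi$ is bilipschitz. Set
\[
\mathrm{E}\bu(\bx) := \begin{cases} \bu(\bx), & \bx \in D, \\ R\, \bu(\phi(\bx)), & \bx \notin D, \end{cases}
\]
where $R = \mathrm{diag}(1, \ldots, 1, -1)$ flips the $d$-th coordinate, mirroring the construction used by Nitsche in \cite{nitsche1981korn} for Korn's second inequality. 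The $L^p$-bound $\|\mathrm{E}\bu\|_{L^p(\bbR^d)} \le C \|\bu\|_{L^p(D)}$ is immediate from the bilipschitz change of variables.

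For the seminorm, the integral of the projected-difference kernel over $D \times D$ is exactly $[\bu]_{\cX^{s,p}(D)}^p$. For $D^c \times D^c$, the substitution $(\bx,\by) \mapsto (\phi(\bx),\phi(\by))$ together with the identity $R\eta \cdot R\bu = \eta \cdot \bu$ recovers $[\bu]_{\cX^{s,p}(D)}^p$ exactly in the flat case $f \equiv 0$; for general small-Lipschitz $f$, the deviation of $\phi$ from the flat reflection---the distortion of the unit vector $\frac{\bx - \by}{|\bx - \by|}$ under $\phi$ and the perturbation of its Jacobian---produces error terms of order $\rmM^p |\bu|_{W^{s,p}(D)}^p$.

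The principal obstacle is the mixed integral over $D \times D^c$, which after the change of variables $\bz = \phi(\by)$ reduces (up to a bounded Jacobian) to
\[
\iintdm{D}{D}{\frac{\bigl|(\bu(\bx) - R\bu(\bz)) \cdot \frac{\bx - \phi^{-1}(\bz)}{|\bx - \phi^{-1}(\bz)|}\bigr|^p}{|\bx - \phi^{-1}(\bz)|^{d+sp}}}{\bz}{\bx}.
\]
Decompose $R\bu(\bz) = \bu(\bz) + (R-\mathrm{I})\bu(\bz)$, where $(R-\mathrm{I})\bu(\bz)$ has only a $d$-th component equal to $-2 u_d(\bz)$. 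For the $\bu(\bz)$-piece one uses $|\bx - \phi^{-1}(\bz)| \sim |\bx - \bz|$ and that the perturbed unit vector differs from $\frac{\bx - \bz}{|\bx - \bz|}$ by $O(\rmM)$; expanding and applying the triangle inequality yields the bound $C\bigl([\bu]_{\cX^{s,p}(D)}^p + \rmM^p |\bu|_{W^{s,p}(D)}^p\bigr)$.

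The remaining $(R-\mathrm{I})$-contribution is the essential new piece,
\[
C \iintdm{D}{D}{\frac{|u_d(\bz)|^p}{|\bx - \phi^{-1}(\bz)|^{d+sp}}}{\bz}{\bx}.
\]
Since $\phi^{-1}(\bz)$ lies at distance comparable to $\mathrm{dist}(\bz, \partial D)$ from $\partial D$ on the opposite side, evaluating the inner $\bx$-integral of the singular kernel yields a constant multiple of $\mathrm{dist}(\bz, \partial D)^{-sp}$, reducing the mixed integral to
\[
C \int_D \frac{|u_d(\bz)|^p}{\mathrm{dist}(\bz, \partial D)^{sp}} \, \mathrm{d}\bz,
\]
which by the fractional Hardy inequality of \cite{MengeshaTadele2019FKaH} is bounded by $C\bigl([\bu]_{\cX^{s,p}(D)}^p + \|\bu\|_{L^p(D)}^p\bigr)$. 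Summing all contributions yields \eqref{eq:ExtensionBound}. The threshold $\rmM_0$ is fixed small enough that $\phi$ is genuinely bilipschitz with well-behaved Jacobian and that the half-space Hardy inequality of \cite{MengeshaTadele2019FKaH} survives perturbation to small-Lipschitz epigraphs; the genuinely new ingredient is the use of this Hardy inequality to absorb the normal-component term in the mixed integral.
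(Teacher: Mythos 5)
The proposal takes a genuinely different route from the paper, but the route has a serious gap that the paper's construction is specifically engineered to avoid. You use a single reflection $\phi(\bx',x_d)=(\bx',2f(\bx')-x_d)$ with the sign-flip $R=\mathrm{diag}(1,\ldots,1,-1)$. The paper (following Nitsche) instead uses a \emph{two-parameter} reflection: for two distinct scales $\lambda\neq\mu$ and constants $k,\ell,m,n$ satisfying $k+\ell=1=m+n$, $\lambda k=-m$, $\mu\ell=-n$, the tangential components of $\mathrm{E}\bu$ on $D_-$ are $k\,u_i^\lambda + \ell\,u_i^\mu$ while the normal component is $m\,u_d^\lambda + n\,u_d^\mu$, with $u_j^\eta(\bx)=u_j(\Phi_\eta(\bx))$. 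Your construction corresponds to taking a single scale $\lambda=1$ with $m=-1$, which violates $m+n=1$; it is not what Nitsche does, and the distinction is not cosmetic.

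The gap appears in the mixed integral over $D\times D^c$. When you split off the $(R-\mathrm{I})$-piece you arrive at
\[
\int_D\frac{|u_d(\bz)|^p}{\mathrm{dist}(\bz,\p D)^{sp}}\,\rmd\bz\,,
\]
and you claim this is bounded by $C\bigl([\bu]_{\cX^{s,p}(D)}^p+\Vnorm{\bu}_{L^p(D)}^p\bigr)$ via the fractional Hardy inequality of \cite{MengeshaTadele2019FKaH}. That is not what Lemma 4.1 of \cite{MengeshaTadele2019FKaH} says, and the inequality you need is false for $sp<1$ with a constant independent of $\bu$. Indeed, for $sp<1$ a smooth cutoff equal to $1$ on a fixed compact set and transitioning to $0$ over a shrinking boundary layer has $W^{s,p}$-seminorm tending to $0$ while the weighted integral stays bounded away from $0$; one can push this further to make the ratio degenerate. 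This is precisely the reason the paper explicitly rules out extension by zero: the resulting constant would depend on $\distt(\suppp\bu,\p D)$. What Lemma 4.1 of \cite{MengeshaTadele2019FKaH} provides is a Hardy bound for the \emph{difference of two dilations}
\[
\int_{\bbR^d_+}\frac{|\bF_\lambda(\bv)-\bF_\mu(\bv)|^p}{x_d^{sp}}\,\rmd\bx \leq C\,[\bv]_{\cX^{s,p}(\bbR^d_+)}^p\,,
\]
where the numerator vanishes at the boundary by construction; this is valid for all $sp\neq 1$. The entire point of the two-parameter Nitsche construction, and of the algebraic constraints on $k,\ell,m,n,\lambda,\mu$, is that after re-organizing the mixed-term integrand (paper's equation \eqref{eq:TermIII:UnsimplifiedIntegrand} and its simplification) the only ``Hardy-type'' contribution that survives is $(k-m)(y_d-f(\bx'))\bigl(u_d(\Phi_\lambda(\bx))-u_d(\Phi_\mu(\bx))\bigr)$, i.e.\ a difference of the normal component at two different reflections, which sits squarely in the scope of that lemma.

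There is a second, subtler, problem in your handling of the $\bu(\bz)$-piece. Writing $\bx-\phi^{-1}(\bz)=(\bx-\bz)+(\bz-\phi^{-1}(\bz))$ with $\bz-\phi^{-1}(\bz)=(0,2(z_d-f(\bz')))$, the cross term $2\bigl(u_d(\bx)-u_d(\bz)\bigr)\bigl(z_d-f(\bz')\bigr)$ is present already in the flat case $f\equiv 0$; it is not $O(\rmM)$. Your claims that $|\bx-\phi^{-1}(\bz)|\sim|\bx-\bz|$ and that the perturbed direction vector differs from $\dfrac{\bx-\bz}{|\bx-\bz|}$ by $O(\rmM)$ are both false: take $\bx$ near $\bz$ and the two are wildly different, since $|\bx-\phi^{-1}(\bz)|\gtrsim\distt(\bz,\p D)$ while $|\bx-\bz|$ can be $0$. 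Only the one-sided comparison $|\bx-\bz|\leq C|\bx-\phi^{-1}(\bz)|$ (the paper's Lemma \ref{lma:GeometricInequality} / Corollary \ref{rmk:UpperBoundOnM}) holds. Consequently this cross term produces an error controlled only by $C\,[\bu]_{W^{s,p}(D)}^p$ with $C$ not proportional to $\rmM^p$. Such a bound would destroy the absorption in the proof of Theorem \ref{thm:KornsGraph}, which hinges on the $\rmM$ multiplying $\Vnorm{\bu}_{W^{s,p}(D)}$ in \eqref{eq:ExtensionBound} being small. You would need to rebuild the argument with the two-parameter Nitsche reflection as in the paper.
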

Notice that the proposition applied to the half-space $D=\mathbb{R}^{d}_{+}$ which corresponds to $f=0,$ and so $M=0$ is precisely the extension operator proved in \cite{MengeshaTadele2019FKaH}.  
The proposition can therefore be viewed as a generalization of \cite[Theorem 2.2]{MengeshaTadele2019FKaH} for general epigraphs. The presence of the Lipschitz constant $M$ in the left hand side of \eqref{eq:ExtensionBound} as a multiplier of $\Vnorm{\bu}_{W^{s,p}(D)} $ is crucial in what we do next as it will enable us to absorb this term on the right-hand side.

As an application of Korn's inequality, we study  the higher fractional differentiability and higher integrability of weak solutions of the nonlinear system of nonlocal  equations 
\begin{equation}\label{eq:PLaplaceSystem}
p.v.\intdm{\bbR^d}{\frac{A(\bx,\by)}{|\bx-\by|^{d+sp}}|\cD(\bu)(\bx,\by)|^{p-2}\cD(\bu)(\bx,\by)}{\by}= \bff(\bx)\,, \qquad \bx \in \bbR^d\,,
\end{equation}
where $d \geq 2$, $p\geq 2$, $0<s<1$, 
the quantity $\cD(\bu)(\bx,\by)$ denotes the projected difference given by $
\cD(\bu)(\bx,\by) := \big( \bu(\bx)-\bu(\by) \big) \cdot \frac{\bx-\by}{|\bx-\by|}$. 
The function 
$A : \bbR^d \times \bbR^d \to (0,\infty)$ serves as a coefficient and is measurable, symmetric ($A(\bx,\by) = A(\by,\bx)$) and satisfies the ellipticity condition
\begin{equation}\label{eq:ConditionsOnA}
0 < \frac{1}{\Lambda} \leq A(\bx,\by) \leq \Lambda\,, \qquad \bx\,, \by \in \bbR^d\,.
\end{equation}
The system of equations \eqref{eq:PLaplaceSystem} is strongly coupled and for $p=2$, the equation appears in linearized peridyanmics, a nonlocal model of continuum mechanics \cite{Silling2000, Silling2007, Silling2010}, corresponding to a singular fractional kernel. 

Given $\bff \in L^{1}_{loc}(\mathbb{R}^{d})$, by a weak solution of \eqref{eq:PLaplaceSystem} we mean $\bu \in [W^{s,p}(\bbR^d)]^{d}$ such that 
\begin{equation}\label{eq:PLaplaceWeakSoln}
\cE_{p,A}(\bu,\vphi)= \int_{\bbR^{d}} \bff(\bx)\vphi(\bx)d\bx\,,\quad \text{ $\forall \vphi \in \big[ C^{\infty}_c(\bbR^d) \big]^d$  }
\end{equation}
where the integral form $\cE$ is given by  
\[\cE_{p,A}(\bu,\vphi)=\iintdm{\bbR^d}{\bbR^d}{\frac{A(\bx,\by)}{|\bx-\by|^{d+sp}} |\cD(\bu)(\bx,\by)|^{p-2} \cD(\bu)(\bx,\by) \, \cD(\vphi)(\bx,\by)}{\by}{\bx}. \]
Existence of solution satisfying \eqref{eq:PLaplaceWeakSoln} can be proved via variational methods, say under some complementary conditions on ${\bf u}$ outside of a bounded set $\Omega$. For example, for $\bff\in L^{p'_\ast}(\bbR^{d})$, where $(p')_\ast = {p'd\over d+p's}$, and ${1\over p} + {1\over p'}=1$, we can minimize the energy 
\[
{\bf u}\mapsto \cE_{p,A}(\bu,\bu)-\int_{\bbR^{d}} \bff(\bx)\bu(\bx) \, \rmd \bx
\]
over the subspace $\{{\bf u}\in [W^{s,p}(\bbR^d)]^{d}: {\bf u}=0\quad \text{on $\Omega$} \}$. Notice that the energy space associated to the above variational problem is precisely $\cX^{s,p}(\bbR^d)$, and so by the fractional Korn's inequality is equal to $[W^{s,p}(\bbR^d)]^{d}$. Coercivity of the above functional energy can be proved via the Poincar\'e-Korn inequality \cite{Du-Navier1, MengeshaDuElasticity} and again by the fractional Korn's inequality. 

Our focus here is on the self-improving properties of the nonlinear system of nonlocal  equations. By ``self-improving" we mean the increase in higher fractional  differentiability and integrability of solutions to nonlocal equations by  virtue of being a solution to the nonlocal system corresponding to ${\bf f}$ that has improved integrability.  To be precise we have the following:
\begin{theorem}\label{thm:MainRegularityResult}
Suppose that $p \in [2,\infty)$ and $s \in (0,1)$ with $sp < n$ and $sp \neq 1$. Let $\delta_0 > 0$ be given, and for $\delta \in (0,\delta_0)$ assume that  $\bff \in \big[ L^{p'_\ast + \delta}(\bbR^d) \big]^d$. Suppose that  $A$ satisfies \eqref{eq:ConditionsOnA} and that $\bu \in \big[ W^{s,p}(\bbR^d) \big]^d$ is a weak solution to \eqref{eq:PLaplaceSystem} satisfying \eqref{eq:PLaplaceWeakSoln}. Then there exists $\veps_0 \in (0,1-s)$ depending on $d$, $s$, $p$, $\delta$ and $\Lambda$ such that $\bu \in \big[W^{s+\veps_0, p+\veps_0}_{loc}(\bbR^d) \big]^d$.
\end{theorem}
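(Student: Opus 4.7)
My plan is to follow the standard three-step route for self-improving regularity of nonlocal equations, adapted to the vectorial setting via the fractional Korn's inequality of Theorem \ref{thm:KornsC1}. The steps are: (i) prove a Caccioppoli-type estimate that controls the $\cX^{s,p}$ seminorm of $\bu$ on a ball by data on a concentric larger ball plus a forcing term; (ii) upgrade this $\cX^{s,p}$-bound to full $W^{s,p}$-control via Korn on the ball, then apply fractional Sobolev embedding to derive a reverse-H\"older inequality for fractional difference quotients; (iii) apply a Gehring-type self-improving lemma on dyadic cubes, in the spirit of Kuusi--Mingione--Sire and Bojarski--Iwaniec, to conclude.

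For the Caccioppoli step I would fix nested balls $B_r \subset B_{2r} \subset \bbR^d$ and a cutoff $\eta \in C_c^{\infty}(B_{3r/2})$ with $\eta \equiv 1$ on $B_r$. Testing the weak formulation \eqref{eq:PLaplaceWeakSoln} with $\vphi = \eta^p(\bu - \mathbf{c})$ for a suitably chosen constant vector $\mathbf{c}$ (for instance the mean of $\bu$ over $B_{2r}$), one expands $\cD(\vphi)(\bx,\by)$ as a principal piece $\eta^p(\bx)\cD(\bu-\mathbf{c})(\bx,\by)$ plus a commutator-type remainder driven by $\eta^p(\bx)-\eta^p(\by)$. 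After symmetrization, the principal piece produces the coercive contribution $[\eta(\bu-\mathbf{c})]_{\cX^{s,p}}^p$, while the remainder together with tail contributions from $\bbR^d\setminus B_{2r}$ are bounded by Young's and H\"older's inequalities in terms of the full $W^{s,p}$ seminorm of $\bu-\mathbf{c}$ on $B_{2r}$ with small multiplier (absorbed at the end) plus lower-order $L^p$ terms; the forcing $\int \bff\cdot\vphi$ is controlled by Sobolev embedding and $\bff\in L^{p'_\ast+\delta}_{loc}$.

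The step that distinguishes this system from the scalar setting is the use of Theorem \ref{thm:KornsC1} applied to $\eta(\bu-\mathbf{c})$, which lies in $[W^{s,p}_0(U)]^d$ for any bounded $C^1$ neighborhood $U$ of $\mathrm{supp}\,\eta$. This upgrades the Caccioppoli bound to a full $W^{s,p}$ estimate at the cost of absorbable $L^p$ terms. Combining with the fractional Sobolev embedding $W^{s,p}_0\hookrightarrow L^{pd/(d-sp)}$ and a standard hole-filling/interpolation argument yields a reverse-H\"older inequality of the form
\[
\left(\frac{1}{|B_r|}\int_{B_r} G^p \right)^{1/p} \;\le\; C\left(\frac{1}{|B_{2r}|}\int_{B_{2r}} G^{q}\right)^{1/q} + \text{(forcing)}
\]
for a suitable fractional-gradient quantity $G$ associated with $\bu$, some exponent $q<p$, and a forcing term controlled by $\|\bff\|_{L^{p'_\ast+\delta}}$ on a slightly larger ball.

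The third step then applies a fractional Gehring-type self-improving lemma on a dyadic decomposition, as developed for nonlocal scalar equations, to obtain the existence of $\veps_0\in(0,1-s)$ with $\bu\in[W^{s+\veps_0,p+\veps_0}_{loc}(\bbR^d)]^d$. The main obstacle I expect is in Steps 1--2: one must track the dependence of the Korn constant on the radius $r$ under rescaling, choose $\mathbf{c}$ and $q$ so that the reverse-H\"older inequality has exactly the correct self-improving form, and manage the commutator and nonlocal tail terms so that everything absorbable is genuinely absorbed. Once the reverse-H\"older inequality is in hand, the Gehring step is essentially mechanical and proceeds as in the scalar case.
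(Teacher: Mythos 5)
Your plan matches the paper's architecture: prove a Caccioppoli-type estimate by testing the weak formulation with a cutoff of $\bu$, invoke the fractional Korn inequality to pass from $\cX^{s,p}$-control (which the equation naturally delivers through $\cD$) to full $W^{s,p}$-control, derive a reverse-H\"older inequality, and close with the Kuusi--Mingione--Sire fractional Gehring lemma. The role you assign to Korn --- to replace the symmetric-projected seminorm by the full Gagliardo seminorm at the cost of absorbable lower-order terms --- is exactly where the paper uses it, and is the genuinely new ingredient for the vectorial system; once the Caccioppoli inequality is in hand, the paper itself delegates the reverse-H\"older and Gehring steps to \cite{kuusi2015} and \cite{Scott-Mengesha-preprint} for general $p \geq 2$.

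Two points where your sketch is imprecise and would need sharpening. First, you propose to invoke Theorem \ref{thm:KornsC1} on a ``bounded $C^1$ neighborhood $U$ of $\mathrm{supp}\,\eta$'' and then track the constant's dependence on $r$. The paper instead first derives Corollary \ref{thm:KornsWithScaling} by a scaling argument, which makes the $r$-dependence explicit: the lower-order term scales as $r^{-sp}\int_B |\psi\bu|^p$, which is precisely the weight needed for the subsequent dyadic Gehring argument. You should derive that scaled form rather than treat the constant as a black box. Second, your reverse-H\"older inequality is stated for a pointwise quantity $G$ integrated over balls in $\bbR^d$, which is the classical Gehring setup. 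The fractional Gehring lemma of Kuusi--Mingione--Sire is formulated for the dual pair $(\rmU,\nu)$ on $\bbR^{2d}$, with $\rmU(\bx,\by) = |\bu(\bx)-\bu(\by)|/|\bx-\by|^{s+\veps}$ and $\nu$ the measure $|\bx-\by|^{-(d-\veps p)}\,\rmd\bx\,\rmd\by$, and the reverse-H\"older inequality is proved on diagonal cubes in $\bbR^{2d}$; it also must carry the nonlocal tail term $\int_{\bbR^d\setminus B}\frac{|\bu(\by)|^{p-1}}{|\bx_0-\by|^{d+sp}}\,\rmd\by$, which your Caccioppoli estimate does produce but your reverse-H\"older display omits. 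Neither issue is fatal --- both are filled in by following KMS faithfully --- but the sketch as written does not yet make the Gehring step ``essentially mechanical.''

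A minor remark: the paper tests with $\psi^p\bu$ rather than $\psi^p(\bu - \mathbf{c})$, relying on the invariance $\cD(\bu - \mathbf{c}) = \cD(\bu)$ to subtract constants later where needed in the KMS machinery. Your mean-subtracted test function is equally valid and arguably cleaner, so this is a stylistic rather than substantive difference.
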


For scalar equations, such self-improving properties have been proved by \cite{kuusi2015, Auscher2018nonlocal}, where it was explained that this property is unique to solutions of nonlocal equations.  The result stated in the above theorem confirms that such properties also extend to strongly coupled systems of nonlocal equations such as\eqref{eq:PLaplaceSystem}.

The paper is organized as follows. In Section \ref{sec:epigraph} we prove the fractional Korn's inequality for epigraphs, Theorem \ref{thm:KornsGraph}. This is also the section where the theorem on the extension operator, Theorem \ref{ext-operator} will also be proved.  In Section \ref{sec:smoothdomains}, we prove the main result, Theorem \ref{thm:KornsC1}. In the last section, the proof of the self-improving property of the coupled system \eqref{eq:PLaplaceSystem} will be discussed.

\section{Fractional Korn's inequality for epigraphs}\label{sec:epigraph}
In this section, we will prove Theorem \ref{thm:KornsGraph}. As we indicated earlier, the main tool is Proposition \ref{ext-operator} which states the existence of a bounded extension operator for vector fields defined over epigraphs. Thus, the main task is proving Proposition \ref{ext-operator} which we do so as follows. As before, we assume that $D$ is an epigraph supported by $f$. We introduce $D_{-} = \bbR^{d}\setminus \overline{D}$ which can be expressed in terms of the defining function as $D_{-} = \{(\bx', x_d)\in \bbR^{d}: x_d<f(\bx')\}$ The following supporting lemma shows that $D$ and $D_{-}$ are diffeomorphic and its proof follows from direct calculation. 

\begin{lemma}\label{lma-PropertiesOfCoordinateChange}
For $\eta > 0$, define $\Phi_{\eta} : D_- \to D$ by
$\Phi_{\eta}(\bx) := \big( \bx',f(\bx') + \eta(f(\bx') - x_d) \big)\,.$
Then $\Phi_{\eta}$ is a $C^1$ diffeomorphism, with inverse
$\left( \Phi_{\eta} \right)^{-1}(\bx) := \left( \bx',f(\bx') + \frac{1}{\eta}(f(\bx')-x_d) \right)\,,$
and
$\det \grad \Phi_{\eta} = -\eta\,.$
\end{lemma}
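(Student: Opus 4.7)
The proof is a direct verification of each of the four claims (mapping property, $C^1$ regularity, inverse formula, and Jacobian determinant), and my plan is to proceed by explicit computation throughout. The key simplifying observation is that $\Phi_\eta$ acts as the identity on the first $d-1$ coordinates and is affine in $x_d$ in the last coordinate, so everything reduces to one-variable calculus plus a block-matrix determinant.

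First, I would verify that $\Phi_\eta$ sends $D_-$ into $D$. Since $\bx \in D_-$ means $x_d < f(\bx')$, we have $f(\bx') - x_d > 0$; combined with $\eta > 0$, this gives that the last coordinate of $\Phi_\eta(\bx)$ equals $f(\bx') + \eta(f(\bx') - x_d) > f(\bx')$, so $\Phi_\eta(\bx) \in D$. The $C^1$ regularity of $\Phi_\eta$ is inherited directly from the hypothesis $f \in C^1(\bbR^{d-1})$.

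Second, I would derive the inverse by solving $\by = \Phi_\eta(\bx)$ for $\bx$. The first $d-1$ coordinates force $\bx' = \by'$ immediately. Substituting into the last-coordinate equation $y_d = f(\bx') + \eta(f(\bx') - x_d)$ and isolating $x_d$ yields $x_d = f(\by') + \eta^{-1}(f(\by') - y_d)$, which is exactly the stated formula. A one-line substitution then verifies both composition identities, and the same positivity argument as above (run in reverse, using $y_d > f(\by')$ when $\by \in D$) shows the candidate inverse sends $D$ back into $D_-$.

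Finally, I would compute $\grad \Phi_\eta$. Because the first $d-1$ components of $\Phi_\eta$ are simply $\bx' \mapsto \bx'$, the Jacobian matrix has block lower-triangular form
\[
\grad \Phi_\eta(\bx) = \begin{pmatrix} I_{d-1} & 0 \\ (1+\eta)\,\grad f(\bx') & -\eta \end{pmatrix},
\]
so $\det \grad \Phi_\eta = -\eta$ follows from the determinant formula for block triangular matrices. There is no genuine obstacle here --- the lemma is purely a bookkeeping exercise --- so the point is only to record the computation cleanly, since the value $|\det \grad \Phi_\eta| = \eta$ will appear as the crucial Jacobian factor in the change-of-variables underlying the Nitsche-type reflection used to build the bounded extension operator of Proposition \ref{ext-operator}.
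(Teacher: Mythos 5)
Your proof is correct and follows the paper's approach exactly; the paper gives no details beyond noting that the lemma ``follows from direct calculation'' and then records the Jacobian in a block form immediately after the lemma statement. The only cosmetic difference is that the paper writes $\grad \Phi_\eta$ as block upper-triangular, the transpose of your block lower-triangular form (a convention choice that does not affect the determinant or any subsequent norm estimate).
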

We remark for the diffeomorphism $\Phi$ in the above lemma, we can compute 
$\grad \Phi_\eta = \begin{bmatrix}
\bbI^{d-1}&(1+\eta)\grad f\\
0&-\eta
\end{bmatrix}$ where $\bbI^{d-1}$ is the identity matrix in $\bbR^{d-1 \times d-1}$. Similarly, $\grad (\Phi_\eta)^{-1} = \begin{bmatrix}
\bbI^{d-1}&(1+ {1\over \eta})\grad f\\
0&-{1\over \eta}
\end{bmatrix}$. As a consequence, direct calculations show that $\Vnorm{\grad \Phi_{\eta}}_{L^{\infty}(D_-)}$ and $\Vnorm{\grad (\Phi_{\eta})^{-1}}_{L^{\infty}(D)}$  are given by $\sqrt{d-1 + \eta^{2} + (1+\eta)^{2}M}$ and $\sqrt{d-1 + {1\over \eta^{2}} + (1+{1\over \eta})^{2}M}$ respectively. 
Specifically, both $\Vnorm{\grad \Phi_{\eta}}_{L^{\infty}(D_-)}$ and $\Vnorm{\grad (\Phi_{\eta})^{-1}}_{L^{\infty}(D)}$ are bounded from below by a constant that depends only on $d$ and $\eta$ uniformly in $M$. 

The diffeomophism $\Phi_\eta$ also satisfies  the following geometric inequality which says that if the Lipschitz constant of the supporting function of $D$  is sufficiently small then the distances from $\bz$ and $(\Phi_{\eta})^{-1}(\bz)$ to any arbitrary point $\by$ in $D$ are comparable. 
\begin{lemma}\label{lma:GeometricInequality}
Let $\eta > 0$ and let $C_{\eta}$ be a constant such that $C_{\eta} > \max \{ 1,\eta \}$. Then if 
$\textstyle \mathrm{M}^2 \leq \frac{\left( C_{\eta}^2-\eta^2 \right) \left( C_{\eta}^2-1 \right) }{(C_{\eta}^2 + \eta)^2}\,,$
then for every $\bz$, $\by \in D$.
\begin{equation}\label{eq:ComparisonOfMeasures}
|\bz - \by| \leq C_{\eta} \left| (\Phi_{\eta})^{-1}(\bz) - \by \right|
\end{equation}

\end{lemma}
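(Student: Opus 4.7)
The plan is to reduce the claimed geometric inequality to a one-variable quadratic inequality and verify it by a short case analysis. Write $\bz = (\bz', z_d)$, $\by = (\by', y_d) \in D$ and set $\delta := z_d - f(\bz') > 0$, so that Lemma~\ref{lma-PropertiesOfCoordinateChange} identifies $(\Phi_\eta)^{-1}(\bz) = (\bz', f(\bz') - \delta/\eta)$. Introduce the shorthand $r := |\bz' - \by'|$ and $B := f(\bz') - y_d$; the Lipschitz bound on $f$ together with $y_d > f(\by')$ yields the crucial one-sided constraint $B \leq f(\bz') - f(\by') \leq \rmM r$. A direct expansion gives $|\bz - \by|^2 = r^2 + (B + \delta)^2$ and $|(\Phi_\eta)^{-1}(\bz) - \by|^2 = r^2 + (B - \delta/\eta)^2$, so that the desired inequality $|\bz - \by|^2 \leq C_\eta^2 |(\Phi_\eta)^{-1}(\bz) - \by|^2$ is equivalent to
\[
Q(B) := (C_\eta^2 - 1) B^2 - \frac{2\delta(\eta + C_\eta^2)}{\eta} B + \frac{\delta^2 (C_\eta^2 - \eta^2)}{\eta^2} + (C_\eta^2 - 1) r^2 \;\geq\; 0
\]
for every admissible $B$, that is, for every $B \leq \rmM r$.

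Since $C_\eta > 1$, $Q$ is an upward parabola in $B$ with vertex at $B^\ast := \delta(\eta + C_\eta^2)/(\eta(C_\eta^2 - 1)) > 0$; I would split into two cases depending on the position of the vertex relative to $\rmM r$. If $B^\ast \geq \rmM r$, then $Q$ is nonincreasing on $(-\infty, \rmM r]$, so it suffices to prove $Q(\rmM r) \geq 0$; viewed as a quadratic in $\delta$, $Q(\rmM r)$ has discriminant proportional to $\rmM^2 (\eta + C_\eta^2)^2 - (C_\eta^2 - \eta^2)(C_\eta^2 - 1)(1 + \rmM^2)$, which is nonpositive under the lemma's hypothesis. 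If instead $B^\ast < \rmM r$, the minimum of $Q$ on $(-\infty, \rmM r]$ is the vertex value, and a direct computation using the algebraic identity
\[
(\eta + C_\eta^2)^2 - C_\eta^2 (\eta + 1)^2 \;=\; (C_\eta^2 - 1)(C_\eta^2 - \eta^2)
\]
yields $Q(B^\ast) = (C_\eta^2 - 1) r^2 - \delta^2 C_\eta^2 (\eta+1)^2/(\eta^2(C_\eta^2 - 1))$. Combining the case assumption $B^\ast < \rmM r$ (which gives $\delta < \rmM r\, \eta (C_\eta^2 - 1)/(\eta + C_\eta^2)$), the hypothesis on $\rmM$, and the elementary inequality $(a+b)^2 \geq 4ab$ applied with $a = C_\eta^2 (\eta+1)^2$ and $b = (C_\eta^2 - 1)(C_\eta^2 - \eta^2)$ then delivers $Q(B^\ast) \geq 0$.

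The main obstacle is purely algebraic: isolating the identity $(\eta + C_\eta^2)^2 = C_\eta^2(\eta+1)^2 + (C_\eta^2 - 1)(C_\eta^2 - \eta^2)$, which simultaneously explains the exact form of the admissible threshold on $\rmM$ in the hypothesis and is the mechanism that closes both branches of the case analysis. Once this identity is in hand, the remaining work is careful bookkeeping of signs and the disposition of the vertex $B^\ast$ relative to the endpoint $\rmM r$.
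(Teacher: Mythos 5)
Your proof is correct, but it takes a genuinely different route from the paper's. The paper introduces four quantities $\alpha = z_d - f(\bz')$, $\beta = f(\bz') - f(\by')$, $\gamma = y_d - f(\by')$, and $\delta_{\mathrm{pap}} = |\bz'-\by'|$, expands out the inequality, and then \emph{discards} the two manifestly nonnegative terms $2\left(\tfrac{C_\eta^2}{\eta}+1\right)\alpha\gamma$ and $(C_\eta^2-1)(\beta-\gamma)^2$ (using $\alpha,\gamma \geq 0$); after also replacing $\beta$ by $\rmM\delta_{\mathrm{pap}}$ this collapses the problem to a single two-variable quadratic $\Theta(\alpha,\delta_{\mathrm{pap}}) = \xi_1\alpha^2 - \xi_2\rmM\alpha\delta_{\mathrm{pap}} + \xi_3\delta_{\mathrm{pap}}^2$, whose nonnegativity after completing the square is exactly the lemma's threshold. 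Your version instead groups $B = \beta - \gamma$ into a single free variable subject only to the one-sided constraint $B \leq \rmM r$, and \emph{keeps} the $B^2$ term rather than discarding it. The price is the case analysis on the position of the vertex $B^\ast$ relative to the endpoint $\rmM r$; the benefit is that you retain more of the positivity and in fact close both branches under a weaker restriction on $\rmM$ (Case 1 needs only $\rmM^2 \leq b/a$ and Case 2 only $\rmM^2 < 1 + b/a$, with $a = C_\eta^2(\eta+1)^2$ and $b = (C_\eta^2-1)(C_\eta^2-\eta^2)$, both of which are implied by, and strictly weaker than, the stated hypothesis $\rmM^2 \leq b/(a+b)$). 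The identity $(\eta+C_\eta^2)^2 = C_\eta^2(\eta+1)^2 + (C_\eta^2-1)(C_\eta^2-\eta^2)$ that you isolate is indeed the algebraic heart of this cleaner accounting. One small comment: the appeal to $(a+b)^2 \geq 4ab$ in Case 2 is more than you need — since $a>0$, the hypothesis already forces $\rmM^2 \leq b/(a+b) < 1 < 1 + b/a$, so the conclusion is immediate. In short, the paper's route is shorter and lands exactly on its stated threshold; yours is slightly longer but exposes that the threshold in the lemma is not sharp under the reduction you use.
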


\begin{proof}
Let $\alpha= z_d - f(\bz')$, $\beta = f(\bz') - f(\by')$, $\gamma = y_d - f(\by')$, and $\delta = |\bz'-\by'|$. In order to show \eqref{eq:ComparisonOfMeasures} it suffices to show that
\begin{equation*}
\delta^2 + |\alpha+\beta-\gamma|^2 \leq C_{\eta}^2 \delta^2 + C_{\eta}^2 \left| - \frac{1}{\eta} \alpha + \beta - \gamma \right|^2\,,
\end{equation*}
i.e.
\begin{equation}\label{eq:NonnegPolynomial1}
0 \leq \left( \frac{C_{\eta}^2}{\eta^2} - 1 \right) \alpha^2 -2 \left( \frac{C_{\eta}^2}{\eta} + 1 \right) \alpha\beta + 2 \left( \frac{ C_{\eta}^2}{\eta} + 1 \right) \alpha \gamma+ (C_{\eta}^2 - 1) (\beta-\gamma)^{2} + (C_{\eta}^2 - 1) \delta^2\,.
\end{equation}
The term $\textstyle (C_{\eta}^2 - 1) (\beta-\gamma)^2$ is nonnegative by assumption on $C_{\eta}$. The term $\textstyle 2 \left( \frac{ C_{\eta}^2}{\eta} + 1 \right) \alpha\gamma$ is also nonnegative since $\bz$ and $\by$ are both in $D$ and thus $\alpha$ and $\gamma$ are nonnegative. Thus \eqref{eq:NonnegPolynomial1} holds provided
\begin{equation}\label{eq:NonnegPolynomial2}
\xi_1 \alpha^2 - \xi_2 \alpha\beta + \xi_3 \delta^2 \geq 0\,,
\end{equation}
where $\textstyle \xi_1 = \left( \frac{C_{\eta}^2}{\eta^2} - 1 \right)$, $\textstyle \xi_2 = 2 \left( \frac{C_{\eta}^2}{\eta} + 1 \right)$ and $\xi_3 = C_{\eta}^2 - 1$. Since $\beta \leq \rmM \delta$ and $\xi_2$ is nonnegative, \eqref{eq:NonnegPolynomial2} will in turn hold provided
\begin{equation}\label{eq:NonnegPolynomial3}
\Theta(\alpha,\delta) := \xi_1 \alpha^2 - \xi_2 \rmM \alpha\delta + \xi_3 \delta^2 \geq 0\,.
\end{equation}
After completing the square we may rewrite $\Theta(\alpha,\delta) $ as 
\[
\Theta(\alpha,\delta) =\xi_1\left(\alpha-{\xi_2 M\over 2\xi_1}\delta\right)^{2} + \left(\xi_3- {\xi_2^2M^2 \over 4\xi_1}\right)\delta^2. 
\]
Thus, to prove the lemma it is sufficient to have that $\xi_3- {\xi_2^2M^2 \over 4\xi_1}\geq 0$, which is equivalent to the assumption  $\textstyle \mathrm{M}^2 \leq \frac{\left( C_{\eta}^2-\eta^2 \right) \left( C_{\eta}^2-1 \right) }{(C_{\eta}^2 + \eta)^2}$. That concludes the proof of the lemma. 
\end{proof}
\begin{corollary} \label{rmk:UpperBoundOnM} 
There is $\rmM_0>0$ such that for any $\eta>0$,  $\rmM<\rmM_0$, and $\bz$, $\by \in D$, it follows that 
\begin{equation}\label{rmk:UpperBoundOnM2}
|\bz - \by| \leq 2 \max \{ 1, \eta \}\left| (\Phi_{\eta})^{-1}(\bz) - \by \right|. 
\end{equation}

\end{corollary}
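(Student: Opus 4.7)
The plan is to invoke Lemma \ref{lma:GeometricInequality} with the explicit choice $C_\eta := 2\max\{1,\eta\}$ and then verify that the resulting upper bound on the Lipschitz constant $\rmM$ can be taken to be independent of $\eta > 0$. With this choice, the requirement $C_\eta > \max\{1,\eta\}$ demanded by the lemma is automatic. Hence the desired inequality \eqref{rmk:UpperBoundOnM2} follows once we exhibit a universal $\rmM_0 > 0$ such that $\rmM < \rmM_0$ implies
\[
\rmM^2 \,\leq\, \Psi(\eta) \;:=\; \frac{(C_\eta^2 - \eta^2)(C_\eta^2 - 1)}{(C_\eta^2+\eta)^2} \qquad \text{for every } \eta > 0.
\]

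The key step is to estimate $\Psi$ from below uniformly in $\eta$. Split into two cases according to the definition of $C_\eta$. For $\eta \in (0,1]$ we have $C_\eta = 2$, so
\[
\Psi(\eta) = \frac{(4-\eta^2)\cdot 3}{(4+\eta)^2},
\]
which is easily checked (by differentiation) to be strictly decreasing on $(0,1]$. For $\eta \in [1,\infty)$ we have $C_\eta = 2\eta$, so
\[
\Psi(\eta) = \frac{(4\eta^2-\eta^2)(4\eta^2-1)}{(4\eta^2+\eta)^2} = \frac{3(4\eta^2-1)}{(4\eta+1)^2},
\]
which is strictly increasing on $[1,\infty)$. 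Both pieces agree at $\eta=1$ with $\Psi(1) = 9/25$, so $\Psi$ attains its infimum $9/25$ at $\eta = 1$.

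Consequently any $\rmM_0 \in (0, 3/5]$ works; this constant depends only on the geometry inherent in the choice $C_\eta = 2\max\{1,\eta\}$ and not on the epigraph $D$, the function $f$, or $\eta$. The main (and essentially only) obstacle here is the mild calculus verification that $\Psi$ is minimized at $\eta=1$; I would handle it by computing $\Psi'$ on each of the two pieces as above, which leads to an explicit value of $\rmM_0$. With $\rmM_0$ fixed, Lemma \ref{lma:GeometricInequality} directly yields \eqref{rmk:UpperBoundOnM2}.
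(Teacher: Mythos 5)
Your proof is correct and follows essentially the same approach as the paper: choose $C_\eta = 2\max\{1,\eta\}$ in Lemma \ref{lma:GeometricInequality} and verify the uniform lower bound $\Psi(\eta)\geq 9/25$, which the paper simply asserts (writing the slightly imprecise strict inequality $9/25 < \Psi(\eta)$, whereas equality actually holds at $\eta=1$). Your case analysis by monotonicity on $(0,1]$ and $[1,\infty)$ supplies the calculus verification the paper omits, and your concluding choice $\rmM_0 = 3/5$ matches the paper's.
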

\begin{proof}
For a given $\eta>0$, choose $C_{\eta} = 2 \max \{ 1, \eta \}$ in Lemma \ref{lma:GeometricInequality}. Then we have the lower bound $\textstyle {9\over 25} <  \frac{\left( C_{\eta}^2-\eta^2 \right) \left( C_{\eta}^2-1 \right) }{(C_{\eta}^2 + \eta)^2}$ for every $\eta > 0$. Thus, if we take $\rmM_0 ={3\over 5}$, then the assumption $\rmM < {3\over 5}$ is sufficient to prove \eqref{eq:ComparisonOfMeasures} with this choice of $C_{\eta}$. 
\end{proof}
\begin{lemma}\label{lma:StraighteningOfOmega}
Let $\rmM_0>0,$ and $D$ is an epigraph supported by a Lipschitz function $f$ with Lipschitz constant $\rmM<\rmM_0$. Suppose $\bu : D \to \bbR^d$, $p \in (1,\infty)$, $s \in (0,1)$. Define $\bv : \bbR^d_+ \to \bbR^d$ by
\begin{equation*}
\bv(\bx',x_d) = \bu(\bx',f(\bx')+x_d)\,.
\end{equation*}
Then if $\bu \in C^{0}(D)\cap W^{s,p}(D)$, then $\bv \in C^{0}(\bbR^d_+)\cap \cX^{s,p}(\bbR^d_+)$, with
\begin{equation*}
[\bv]_{\cX^{s,p}(\bbR^d_+)} \leq C \left( [\bu]_{\cX^{s,p}(D)} + \rmM [\bu]_{W^{s,p}(D)} \right)\,,
\end{equation*}
where $C$ is independent of $\rmM$ but depends on $\rmM_0$, $d$, $s$, and $p$.
\end{lemma}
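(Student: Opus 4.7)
The plan is to use the $C^1$-diffeomorphism $\Psi:\bbR^d_+\to D$ given by $\Psi(\bx',x_d)=(\bx',f(\bx')+x_d)$, whose Jacobian determinant is identically $1$, so that $\bv=\bu\circ\Psi$ is continuous on $\bbR^d_+$ whenever $\bu$ is continuous on $D$, and change of variables is measure-preserving. The first quantitative ingredient is a two-sided comparison of distances: since $\Psi(\bx)-\Psi(\by)-(\bx-\by)=(0,f(\bx')-f(\by'))$ has magnitude at most $\rmM|\bx'-\by'|\leq \rmM|\bx-\by|$, an elementary expansion gives
\[
(1-C\rmM)|\bx-\by|\leq |\Psi(\bx)-\Psi(\by)|\leq (1+C\rmM)|\bx-\by|,
\]
so if $\rmM_0$ is small enough (depending only on $d$), then for $\rmM<\rmM_0$ the two quantities are comparable up to a constant depending only on $\rmM_0$.

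The key geometric step is to relate the direction $\hat{e}:=(\bx-\by)/|\bx-\by|$ appearing in $[\bv]_{\cX^{s,p}(\bbR^d_+)}$ to $\hat{w}:=(\Psi(\bx)-\Psi(\by))/|\Psi(\bx)-\Psi(\by)|$, which is the direction naturally coupling to $\bu$. Substituting the decomposition $\bx-\by=\Psi(\bx)-\Psi(\by)-(0,f(\bx')-f(\by'))$ into the inner product yields
\[
(\bv(\bx)-\bv(\by))\cdot\hat{e}=\frac{|\Psi(\bx)-\Psi(\by)|}{|\bx-\by|}(\bu(\Psi(\bx))-\bu(\Psi(\by)))\cdot\hat{w}-\frac{f(\bx')-f(\by')}{|\bx-\by|}(u_d(\Psi(\bx))-u_d(\Psi(\by))).
\]
Using $|f(\bx')-f(\by')|\leq \rmM|\bx-\by|$, the distance comparison, and taking absolute values gives the pointwise bound
\[
|(\bv(\bx)-\bv(\by))\cdot\hat{e}|\leq C|(\bu(\Psi(\bx))-\bu(\Psi(\by)))\cdot\hat{w}|+C\rmM|\bu(\Psi(\bx))-\bu(\Psi(\by))|,
\]
with $C=C(\rmM_0)$.

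To conclude I raise to the $p$-th power, apply $(a+b)^p\leq 2^{p-1}(a^p+b^p)$, divide by $|\bx-\by|^{d+sp}$, and integrate over $\bbR^d_+\times\bbR^d_+$. Changing variables via $\tilde{\bx}=\Psi(\bx)$, $\tilde{\by}=\Psi(\by)$ (Jacobian one) and using the distance comparison to replace $|\bx-\by|^{d+sp}$ by a constant times $|\tilde{\bx}-\tilde{\by}|^{d+sp}$ turns the right-hand side into a constant multiple of $[\bu]_{\cX^{s,p}(D)}^p+\rmM^p[\bu]_{W^{s,p}(D)}^p$. Extracting the $p$-th root using subadditivity delivers the stated inequality. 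The delicate point — and the reason the $W^{s,p}$ seminorm must appear on the right — is that the tangential portion of the direction mismatch is harmless and gets absorbed into the uniform comparison constant, whereas the normal portion contributes the full difference $|\bu(\Psi(\bx))-\bu(\Psi(\by))|$, weighted by the graph slope and therefore by $\rmM$. This explicit smallness factor is precisely what later permits the absorption step in the proof of Proposition \ref{ext-operator}.
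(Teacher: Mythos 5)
Your proposal is correct and follows essentially the same approach as the paper: both use Nitsche's straightening map (your $\Psi$ is the paper's $\Psi^{-1}$), the measure-preserving change of variables, the decomposition of the inner product that isolates the $(0,f(\bx')-f(\by'))$ correction, and the distance comparability $|\Psi(\bx)-\Psi(\by)|\asymp|\bx-\by|$ valid for $\rmM<\rmM_0$. The only cosmetic difference is that you carry out the pointwise decomposition before changing variables while the paper changes variables first, and you make the role of $\hat e$ versus $\hat w$ a bit more explicit.
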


\begin{proof}
Define $\Psi : D \to \bbR^d_+$ by
\begin{equation*}
\Psi(\bx) := (\bx',x_d - f(\bx'))\,.
\end{equation*}
Then
\begin{equation*}
\Psi^{-1}(\bx) = (\bx',x_d + f(\bx'))\,, \quad |\grad (\Psi^{-1}(\bx))| \leq d(1+\rmM)\,, \quad \det \grad (\Psi^{-1}(\bx)) = 1\,.
\end{equation*}
With this, 
\begin{equation*}
\begin{split}
[\bv]_{\cX^{s,p}(\bbR^d_+)}^p &= \iintdm{\bbR^d_+}{\bbR^d_+}{\frac{\left| \big( \bu(\Psi^{-1}(\bx) - \bu(\Psi^{-1}(\by) \big) \cdot (\bx-\by) \right|^p}{|\bx-\by|^{d+sp+p}}}{\by}{\bx} \\
	&\leq \iintdm{D}{D}{\frac{\left| \big( \bu(\bx) - \bu(\by) \big) \cdot \big( \Psi(\bx)-\Psi(\by) \big) \right|^p}{|\Psi(\bx)-\Psi(\by)|^{d+sp+p}}}{\by}{\bx} \\
	&\leq C\iintdm{D}{D}{\frac{\left| \big( \bu(\bx) - \bu(\by) \big) \cdot \big( \bx-\by \big) \right|^p}{|\bx-\by|^{d+sp+p}}}{\by}{\bx} \\
	&\qquad + C \iintdm{D}{D}{\frac{\left| \big( u_d(\bx) - u_d(\by) \big) \cdot \big( f(\bx')-f(\by') \big) \right|^p}{|\bx-\by|^{d+sp+p}}}{\by}{\bx} \\
	&\leq C \left(  [\bu]_{\cX^{s,p}(D)}^p + \rmM^p \iintdm{D}{D}{\frac{\left| u_d(\bx) - u_d(\by) \right|^p}{|\bx-\by|^{d+sp}} \cdot \frac{ \left| \bx'-\by' \right|^p}{|\bx-\by|^{p}}}{\by}{\bx} \right) \\
	&\leq C \left( [\bu]_{\cX^{s,p}(D)}^p + \rmM^p [\bu]_{W^{s,p}(D)}^p \right)\,.
\end{split}
\end{equation*}
where $C$ is independent of $\rmM$ but depends on $\rmM_0$, $d$, $s$, and $p$.
\end{proof}

\begin{proof}[Proof of Proposition \ref{ext-operator}]
We define our extension in the spirit of the work of Nitsche \cite{nitsche1981korn} which later was used in \cite{MengeshaTadele2019FKaH} in the case of a half-space. For $\bu = (\bu',u_d) \in \big[ C^1_c(D) \big]^d$, and for constants $\lambda$, $\mu$, $k$, $\ell$, $m$, and $n$, set
\begin{equation*}
[ \mathrm{E}(\bu)(\bx) ]_i :=
\begin{cases}
u_i(\bx)\,, & \qquad \bx \in D\,, \quad i = 1, 2, \ldots d-1, d\,, \\
k \, u_i^{\lambda}(\bx) + \ell \, u_i^{\mu}(\bx)\,, & \qquad \bx \in D_-\,, \quad i = 1, 2, \ldots d-1\,, \\
m \, u_d^{\lambda}(\bx) + n \, u_d^{\mu}(\bx)\,, & \qquad \bx \in D_-\,,
\end{cases}
\end{equation*}
where
\begin{equation*}
\begin{split}
u_j^{\lambda}(\bx) := u_j \big( \bx',f(\bx') + \lambda(f(\bx')-x_d) \big)\,,\\
u_j^{\mu}(\bx) := u_j \big( \bx',f(\bx') + \mu(f(\bx')-x_d) \big)\,.
\end{split}
\end{equation*}
We choose constants $\lambda$, $\mu$, $k$, $\ell$, $m$, $n$, such that
\begin{equation}\label{eq:ReflectionConditions}
\begin{split}
\lambda > 0\,, \quad \mu > 0\,, \qquad k+ \ell = 1 = m+n\,, \qquad \lambda k = -m\,, \quad \mu \ell = -n\,.
\end{split}
\end{equation}
For $\lambda \neq \mu$ these constants are uniquely defined.

Clearly by \eqref{eq:ReflectionConditions}, $\mathrm{E}$ is bounded from $\big[ C^0(D) \big]^d$ to $\big[ C^0(\bbR^d) \big]^d$. We need to show \eqref{eq:ExtensionBound}.
Splitting the integrand,
\begin{equation*}
\begin{split}
[\mathrm{E}(\bu)]_{\cX^{s,p}(\bbR^d)} &= \iintdm{D}{D}{\frac{\left| \big( \mathrm{E}(\bu)(\bx)-\mathrm{E}(\bu)(\by) \big) \cdot \dotbxy \right|^p}{|\bx-\by|^{d+sp}}}{\by}{\bx} + 2 \iintdm{D_-}{D}{\cdots}{\by}{\bx} + \iintdm{D_-}{D_-}{\cdots}{\by}{\bx} \\
&:= \mathrm{I} + 2 \, \mathrm{II} + \mathrm{III}\,.
\end{split}
\end{equation*}
Clearly, $\mathrm{I} = [\bu]_{\cX^{s,p}(D)}^p$. We bound $\mathrm{III}$ next. From the definition of the extension $\mathrm{E}(\bu)$ on $D_{-}$, we see that 
\begin{equation*}
\mathrm{III} \leq 2^{p-1} \mathrm{III}_A + 2^{p-1}\mathrm{III}_B\,,
\end{equation*}
where
\begin{equation*}
\begin{split}
\mathrm{III}_A&= \iintdm{D_-}{D_-}{\frac{\left| k \big( (\bu')^{\lambda}(\bx) - (\bu')^{\lambda}(\by) \big) \cdot (\bx'-\by') +  m \big( u_d^{\lambda}(\bx) - u_d^{\lambda}(\by) \big) \cdot (x_d-y_d)\right|^p}{|\bx-\by|^{d+(s+1)p}}}{\by}{\bx}\,, \\
\mathrm{III}_B &= \iintdm{D_-}{D_-}{\frac{\left| \ell \big( (\bu')^{\mu}(\bx) - (\bu')^{\mu}(\by) \big) \cdot (\bx'-\by') +  n \big( u_d^{\mu}(\bx) - u_d^{\mu}(\by) \big) \cdot (x_d-y_d)\right|^p}{|\bx-\by|^{d+(s+1)p}}}{\by}{\bx}\,.
\end{split}
\end{equation*}
We proceed to bound $\mathrm{III}_A$; the bound for $\mathrm{III}_B$ will follow similarly. Introduce the coordinate change \\ $\bz = \Phi_{\lambda}(\bx)$, $\bw = \Phi_{
\lambda}(\by)$. Then
\begin{equation*}
\mathrm{III}_A = \frac{1}{\lambda^2} \iintdm{D}{D}{\frac{\left| k \big( \bu'(\bz) - \bu'(\bw) \big) \cdot (\bz'-\bw') +  m \big( u_d(\bz) - u_d(\bw) \big) \cdot \left( \big[ (\Phi_{\lambda})^{-1}(\bz) \big]_d - \big[ (\Phi_{\lambda})^{-1}(\bw) \big]_d \right) \right|^p}{|(\Phi_{\lambda})^{-1}(\bz) - (\Phi_{\lambda})^{-1}(\bw)|^{d+(s+1)p}}}{\bw}{\bz}\,.
\end{equation*}
We next write $\left( \big[ (\Phi_{\lambda})^{-1}(\bz) \big]_d - \big[ (\Phi_{\lambda})^{-1}(\bw) \big]_d \right) = -\frac{1}{\lambda}(z_d - w_d) + \frac{\lambda}{1+\lambda} \big( f(\bz')-f(\bw') \big)$ and split $\mathrm{III}_A$ into two integrals:
\begin{equation*}
\begin{split}
\mathrm{III}_A &\leq \frac{2^{p-1}}{\lambda^2} \iintdm{D}{D}{\frac{\left| k \big( \bu'(\bz) - \bu'(\bw) \big) \cdot (\bz'-\bw') -  m \big( u_d(\bz) - u_d(\bw) \big) \cdot \frac{1}{\lambda}(z_d - w_d) \right|^p}{|(\Phi_{\lambda})^{-1}(\bz) - (\Phi_{\lambda})^{-1}(\bw)|^{d+(s+1)p}}}{\bw}{\bz} \\
	&\qquad + \frac{2^{p-1}}{\lambda^2} \iintdm{D}{D}{\frac{\left| m \big( u_d(\bz) - u_d(\bw) \big) \cdot \frac{\lambda}{1+\lambda} \big( f(\bz')-f(\bw') \big) \right|^p}{|(\Phi_{\lambda})^{-1}(\bz) - (\Phi_{\lambda})^{-1}(\bw)|^{d+(s+1)p}}}{\bw}{\bz} \\
&:= \frac{2^{p-1}}{\lambda^2} (\imath) + \frac{2^{p-1}}{\lambda^2} (\imath \imath)\,.
\end{split}
\end{equation*}
For $\rmM_0$ to be determined and any $\rmM<\rmM_0$, using the bound
\begin{equation*}
|\bz - \bw| \leq \Vnorm{\grad \Phi_{\lambda}}_{L^{\infty}(D_-)} \left| (\Phi_{\lambda})^{-1}(\bz) - (\Phi_{\lambda})^{-1}(\bw) \right|
\end{equation*}
and using that $\lambda k = -m$, it follows immediately from Lemma \ref{lma-PropertiesOfCoordinateChange} that $(\imath)$ is majorized by a constant $C$ times $ k^p [\bu]_{\cX^{s,p}(\Omega_+)}^p$, where $C$ independent of $\rmM$ and depends only on $\rmM_0$, $\lambda$, $d$, $s$, and $p$.
As for $(\imath \imath)$, the integral is bounded by the $W^{s,p}$ norm of the last component of $\bu$. Precisely,
\begin{equation*}
\begin{split}
(\imath \imath) &\leq \left( \frac{m \lambda}{1+\lambda} \right)^p \Vnorm{\grad \Phi_{\lambda}}_{L^{\infty}(D_-)}^{d+(s+1)p} \iintdm{D}{D}{\frac{|u_d(\bz)-u_d(\bw)|^p \, |f(\bz')-f(\bw')|^p}{|\bz-\bw|^{d+(s+1)p}}}{\bw}{\bz} \\
&\leq \left( \frac{m \lambda}{1+\lambda} \right)^p \Vnorm{\grad \Phi_{\lambda}}_{L^{\infty}(D_-)}^{d+(s+1)p} \rmM^p \iintdm{D}{D}{\frac{|u_d(\bz)-u_d(\bw)|^p}{|\bz-\bw|^{d+sp}} \cdot \frac{|\bz'-\bw'|^p}{|\bz-\bw|^p}}{\bw}{\bz} \\
&\leq C \,\rmM^p [u_d]_{W^{s,p}(D)}^p \leq C\, \rmM^p [\bu]_{W^{s,p}(D)}^p\,,
\end{split}
\end{equation*}
where $C$ is independent of $M$ but  depends on $\rmM_0$,  $m$, $\lambda$, $p$, $s$, and $d$.
Thus, the desired bound for $\mathrm{III}_A$ is achieved. The bound for $\mathrm{III}_B$ is obtained using the same argument with the identity $\mu \ell = -n$ serving the role of the identity $\lambda k = -m$. That completes bounding $\mathrm{III}$. 

It remains to bound $\mathrm{II}$.  Notice in this case that in the integrand $\bx\in D_-$ and $\by\in D$. By adding and subtracting the quantities
\begin{equation*}
\begin{split}
k \big( u_d^{\lambda}(\bx) - u_d(\by) \big) &\cdot \Big( \big[ \Phi_{\lambda}(\bx) \big]_d - y_d \Big)\,, \\
\ell \big( u_d^{\mu}(\bx) - u_d(\by) \big) &\cdot \Big( \big[ \Phi_{\mu}(\bx) \big]_d - y_d \Big)\,,
\end{split}
\end{equation*}
the integrand in $\mathrm{II}$ can be expressed as 
\begin{equation}\label{eq:TermIII:UnsimplifiedIntegrand}
\begin{split}
&k \big( \bu(\Phi_{\lambda}(\bx)) - \bu(\by) \big) \cdot \big( \Phi_{\lambda}(\bx) - \by \big) + \ell \big( \bu(\Phi_{\mu}(\bx)) - \bu(\by) \big) \cdot \big( \Phi_{\mu}(\bx) - \by \big) \\
&+ (m-k) \big( u_d(\Phi_{\lambda}(\bx)) - u_d(\by) \big) \cdot \Big( \big[ \Phi_{\lambda}(\bx) \big]_d - y_d \Big)
+ (n-\ell) \big( u_d(\Phi_{\mu}(\bx)) - u_d(\by) \big) \cdot \Big( \big[ \Phi_{\mu}(\bx) \big]_d - y_d \Big) \\
&\quad + m \big( u_d(\Phi_{\lambda}(\bx)) - u_d(\by) \big) \cdot \Big( x_d - \big[ \Phi_{\lambda}(\bx) \big]_d \Big)
+ n \big( u_d(\Phi_{\mu}(\bx)) - u_d(\by) \big) \cdot \Big( x_d - \big[ \Phi_{\mu}(\bx) \big]_d \Big)\,.
\end{split}
\end{equation}
Using the relations \eqref{eq:ReflectionConditions} satisfied by $k$, $\ell$, $m$ and $n$, and noting also that $k - m = n - \ell$, from rudimentary algebraic calculations we see that \eqref{eq:TermIII:UnsimplifiedIntegrand} simplifies to
\begin{equation*}
\begin{split}
&k \big( \bu(\Phi_{\lambda}(\bx)) - \bu(\by) \big) \cdot \big( \Phi_{\lambda}(\bx) - \by \big) + \ell \big( \bu(\Phi_{\mu}(\bx)) - \bu(\by) \big) \cdot \big( \Phi_{\mu}(\bx) - \by \big) \\
&\quad + (k-m) \big( y_d - f(\bx') \big) \cdot \big( u_d(\Phi_{\lambda}(\bx)) - u_d(\Phi_{\mu}(\bx)) \big)\,.
\end{split}
\end{equation*}
Therefore,
\begin{align*}
\begin{split}
\mathrm{II} &\leq C \iintdm{D_-}{D}{\frac{|k \big( \bu(\Phi_{\lambda}(\bx)) - \bu(\by) \big) \cdot \big( \Phi_{\lambda}(\bx) - \by \big)|^p}{|\bx-\by|^{d+(s+1)p}}}{\by}{\bx} \\
&\qquad + C \iintdm{D_-}{D}{\frac{|\ell \big( \bu(\Phi_{\mu}(\bx)) - \bu(\by) \big) \cdot \big( \Phi_{\mu}(\bx) - \by \big)|^p}{|\bx-\by|^{d+(s+1)p}}}{\by}{\bx} \\
&\qquad + C \iintdm{D_-}{D}{\frac{|(k-m) \big( y_d - f(\bx') \big) \cdot \big( u_d(\Phi_{\lambda}(\bx)) - u_d(\Phi_{\mu}(\bx)) \big)|^p}{|\bx-\by|^{d+(s+1)p}}}{\by}{\bx} \\
&:= C \Big( \mathrm{II}_1 + \mathrm{II}_2 + \mathrm{II}_3 \Big)\,.
\end{split}
\end{align*}
Making the change of variables $\bz = \Phi_{\lambda}(\bx)$, we get
\begin{equation}
\mathrm{II}_1 = \frac{k^p}{\lambda} \iintdm{D}{D}{\frac{\left| \big( \bu(\bz)-\bu(\by) \big) \cdot \big( \bz - \by \big) \right|^p}{\left| (\Phi_{\lambda})^{-1}(\bz) - \by \right|^{d+(s+1)p}}}{\by}{\bz}\,.
\end{equation}

Now we invoke  Corollary \ref{rmk:UpperBoundOnM} to fix $\rmM_0$. Then for any $\textstyle \rmM < \rmM_0$,  we have $\mathrm{II}_1 \leq C [\bu]_{\cX^{s,p}(D)}$ where $C$ is independent of $M$ but depends on $\rmM_0$. 
$\mathrm{II}_2$ is bounded similarly. Thus, it remains to bound $\mathrm{II}_3$.

Choosing $\eta = 1$ in Lemma \ref{lma-PropertiesOfCoordinateChange} and using the substitution $\bz = \Phi_{1}(\bx)$,
\begin{equation*}
\begin{split}
\mathrm{II}_3 &\leq C \iintdm{D}{D}{\frac{|y_d - f(\bz')|^p \, |u_d \big( \Phi_{\lambda} ((\Phi_{1})^{-1}(\bz)) \big) - u_d \big( \Phi_{\mu} ( (\Phi_1)^{-1}(\bz)) \big) |^p }{|(\Phi_1)^{-1}(\bz)-\by|^{d+(s+1)p}}}{\by}{\bz} \\
&= C \intdm{D}{J(\bz) \left| u_d \big( \bz', f(\bz') + \lambda(z_d-f(\bz')) \big) - u_d \big( \bz', f(\bz') + \mu(z_d-f(\bz')) \big) \right|^p}{\bz}\,,
\end{split}
\end{equation*}
where
\begin{equation*}
J(\bz) := \intdm{D}{\frac{|y_d-f(\bz')|^p}{\left( |\bz'-\by'|^2 + |(y_d - f(\bz')) + (z_d-f(\bz'))|^2 \right)^{\frac{d+(s+1)p}{2}}} }{\by}\,.
\end{equation*}
By Lemma \ref{lma:JBound}, for each $\bz\in D$, $J(\bz)$ can be bounded as $ J(\bz)\leq \frac{C}{|x_d-f(\bx')|^{sp}}$where $C$ is a constant independent of $M$ but depends on $d,p,$ and $d$. As a consequence we have 
\begin{equation*}
\mathrm{II}_3 \leq C  \intdm{D}{\frac{\left| u_d \big( \bz', f(\bz') + \lambda(z_d-f(\bz')) \big) - u_d \big( \bz', f(\bz') + \mu(z_d-f(\bz')) \big) \right|^p}{|z_d-f(\bz')|^{sp}}}{\bz}\,.
\end{equation*}
Making another change of variables $x_d = z_d - f(\bz')$ and writing $\bz'$ as $\bx'$,
\begin{equation*}
\mathrm{II}_3 \leq C \intdm{\bbR^d_+}{\frac{|u_d(\bx',f(\bx')+\lambda x_d) - u_d(\bx',f(\bx')+ \mu x_d) |^p}{x_d^{sp}}}{\bx}\,.
\end{equation*}
Now, define $\bv : \bbR^d_+ \to \bbR^d$ by $\bv(\bx',x_d) := \bu(\bx',f(\bx')+x_d)$; note that $\bv \in C^1_c(\bbR^d_+)$. For any function $\bw = (\bw', w_d) : \bbR^d_+ \to \bbR^d$ and for any $\eta >0$, define the linear map $\bF_{\eta}(\bw)$ by
\begin{equation*}
\bF_{\eta}(\bw)(\bx) := \left( \frac{\bw'(\bx',x_d)}{\eta} , w_d(\bx',\eta x_d) \right)\,, \qquad \bx \in \bbR^d_+\,.
\end{equation*}
We now see that $u_d(\bx',f(\bx')+\lambda x_d) - u_d(\bx',f(\bx')+ \mu x_d) = v_d(\bx',\lambda x_d) - v_d(\bx',\mu x_d)$ is the $d^{\text{th}}$ component of the vector field $\bF_{\lambda}(\bv) - \bF_{\mu}(\bv)$, and that $\bF_{\lambda}(\bv) - \bF_{\mu}(\bv) \in C^1_c(\bbR^d_+)$. Therefore, by \cite[Lemma 4.1]{MengeshaTadele2019FKaH} and by Lemma \ref{lma:StraighteningOfOmega} we have 
\begin{equation*}
\mathrm{II}_3 \leq C \intdm{\bbR^d_+}{\frac{|\bF_{\lambda}(\bv)-\bF_{\mu}(\bv)|^p}{x_d^{sp}}}{\bx} \leq C[\bv]_{\cX^{s,p}(\bbR^d_+)}^p \leq C \left( [\bu]_{\cX^{s,p}(\Omega_+)}^p + \rmM^p [\bu]_{W^{s,p}(\Omega_+)}^p \right)\,, 
\end{equation*}
for a constant $C$ independent of $\rmM$. 
The proof is complete.
\end{proof}

\begin{proof}[Proof of Theorem \ref{thm:KornsGraph}]
Take $\rmM_0$ as given in Proposition \ref{ext-operator}. 
By the Fractional Korn-type Inequality on all of $\bbR^d$ \cite{MengeshaScott2018Korn},
\begin{equation*}
\Vnorm{\bu}_{W^{s,p}(D)} \leq \Vnorm{\mathrm{E}(\bu)}_{W^{s,p}(\bbR^d)} \leq \Vnorm{\mathrm{E}(\bu)}_{\cX^{s,p}(\bbR^d)} \leq \widetilde{C}\left( \Vnorm{\bu}_{\cX^{s,p}(D)} + \rmM\Vnorm{\bu}_{W^{s,p}(D)} \right)\,.
\end{equation*}
where $\widetilde{C}$ is independent of $M$. 
We now  choose $\rmM$ small so that $\rmM < 1 / \widetilde{C}$ to complete the proof. 
\end{proof}
\section{Fractional Korn's inequality for smooth domains}\label{sec:smoothdomains}
In this section we prove the main result of the paper  Theorem \ref{thm:KornsC1}. First we prove a preliminary result. 
\begin{lemma}[Truncation of a function near the boundary]\label{lma:TruncationNearBdy}
Let $\Omega \subset \bbR^d$ be a bounded domain. Let $\bu \in \big[ \cX^{s,p}(\Omega) \big]^d$ and let $\psi \in W^{1,\infty}(\Omega)$. 
Then $\psi \bu \in \cX^{s,p}(\Omega)$ with
\begin{equation}\label{eq:TruncationNearBdy}
[\psi \bu]_{\cX^{s,p}(\Omega)} \leq C \|\psi\|_{W^{1,\infty}}\left( [\bu]_{\cX^{s,p}(\Omega)} + \Vnorm{\bu}_{L^p(\Omega)} \right)\,,
\end{equation}
where $C=C(d,s,p,\Omega)$.
\end{lemma}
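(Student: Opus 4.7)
The plan is to decompose the projected difference for $\psi \bu$ via a discrete product rule,
\[
\big( (\psi\bu)(\bx) - (\psi\bu)(\by) \big)\cdot \dotbxy = \psi(\bx)\,\big( \bu(\bx)-\bu(\by) \big)\cdot \dotbxy + \big( \psi(\bx)-\psi(\by) \big)\, \bu(\by)\cdot \dotbxy\,,
\]
and then use $|a+b|^p \leq 2^{p-1}(|a|^p+|b|^p)$ to split the seminorm $[\psi \bu]_{\cX^{s,p}(\Omega)}^p$ into two pieces. The first piece is immediately controlled by $\|\psi\|_{L^\infty}^p [\bu]_{\cX^{s,p}(\Omega)}^p$. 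Since $|\bu(\by)\cdot \dotbxy| \leq |\bu(\by)|$, the entire task reduces to bounding the commutator-type integral
\[
I := \iintdm{\Omega}{\Omega}{\frac{|\psi(\bx)-\psi(\by)|^p\, |\bu(\by)|^p}{|\bx-\by|^{d+sp}}}{\by}{\bx}
\]
by a multiple of $\|\psi\|_{W^{1,\infty}}^p \|\bu\|_{L^p(\Omega)}^p$.

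I would handle $I$ by splitting the domain into the near region $\{|\bx-\by|\leq 1\}$ and the far region $\{|\bx-\by|>1\}$. On the near region, I apply the Lipschitz bound $|\psi(\bx)-\psi(\by)| \leq \|\nabla \psi\|_{L^\infty} |\bx-\by|$, giving an integrand bounded by $\|\nabla \psi\|_{L^\infty}^p |\bu(\by)|^p / |\bx-\by|^{d-(1-s)p}$. Since $s<1$ implies $(1-s)p > 0$, the kernel $|\bx-\by|^{-(d-(1-s)p)}$ is locally integrable; integrating in $\bx$ first via Fubini yields a finite factor depending only on $d,s,p$, and the remaining $\by$-integral is exactly $\|\bu\|_{L^p(\Omega)}^p$. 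On the far region (which is empty when $\mathrm{diam}(\Omega) \leq 1$), the kernel $|\bx-\by|^{-(d+sp)}$ is uniformly bounded by $1$, and $|\psi(\bx)-\psi(\by)| \leq 2\|\psi\|_{L^\infty}$, so this piece is majorized by $C|\Omega|\,\|\psi\|_{L^\infty}^p \|\bu\|_{L^p(\Omega)}^p$ with $C$ depending only on $d,s,p$.

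Adding the two regional bounds and taking $p$-th roots gives the stated estimate, since both $\|\psi\|_{L^\infty}$ and $\|\nabla \psi\|_{L^\infty}$ are dominated by $\|\psi\|_{W^{1,\infty}}$. There is no real obstacle here; the only point that requires attention is the local integrability check on the near region, which is precisely where the hypothesis $s<1$ enters. The constant $C$ depends on $\Omega$ only through its diameter (used in bounding the far region).
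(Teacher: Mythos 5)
Your proof is correct and follows essentially the same route as the paper: decompose $(\psi\bu)(\bx)-(\psi\bu)(\by)$ via the discrete product rule, split the seminorm into a $\psi$-weighted $[\bu]_{\cX^{s,p}}$ term plus a commutator term, and reduce the latter to $\|\bu\|_{L^p}$ using the Lipschitz bound on $\psi$ and the local integrability of $|\bx-\by|^{-d+(1-s)p}$. The only difference is cosmetic: you split the commutator integral into near and far regions, whereas the paper applies the Lipschitz bound throughout and exploits the boundedness of $\Omega$ directly by integrating the kernel over a fixed ball $B_{2R}$ containing $\Omega-\Omega$.
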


\begin{proof} The estimate follows by adding and subtracting $\psi(\by) \bu(\bx)$ as follows: 
\begin{equation*}
\begin{split}
[\psi \bu]_{\cX^{s,p}(\Omega)}^p &\leq \iintdm{\Omega}{\Omega}{|\bu(\bx)|^p \frac{|\psi(\by)-\psi(\bx)|^p}{|\bx-\by|^{d+sp}} }{\by}{\bx} + \iintdm{\Omega}{\Omega}{|\psi(\bx)|^p \frac{\diffqbunorm^p}{|\bx-\by|^{d+sp}} }{\by}{\bx} \\
&\leq \Vnorm{\grad \psi}_{L^{\infty}(\Omega)}^p \intdm{\Omega}{|\bu(\bx)|^p \intdm{\Omega}{|\bx-\by|^{-d-sp-p}}{\by}}{\bx} + \|\psi\|_{L^{\infty}}^{p}[\bu]_{\cX^{s,p}(\Omega)}^p \\
&\leq C\Vnorm{\grad \psi}_{L^{\infty}(\Omega)}^p \Vnorm{\bu}_{L^p(\Omega)}^p \intdm{B_{2R}({\bf 0})}{|\bz|^{-d-sp-p}}{\bz} + \|\psi\|_{L^{\infty}}^{p}[\bu]_{\cX^{s,p}(\Omega)}^p\\
& \leq C  \|\psi\|^{p}_{W^{1,\infty}} \left( [\bu]^{p}_{\cX^{s,p}(\Omega)} + \Vnorm{\bu}^{p}_{L^p(\Omega)} \right) \,,
\end{split}
\end{equation*}
where in the next-to-last inequality $R>0$ is chosen so that $\Omega \Subset B_R({\bf 0})$.
\end{proof}

\begin{lemma}[An extension result]\label{lma:ExtensionNearBdy}
Let $\Omega \subset \bbR^d$ be a bounded domain, and let $\Omega \subset \widetilde{\Omega}$, where $\widetilde{\Omega} \subseteq \bbR^d$ is any domain (bounded or unbounded). Suppose that $\bv \in \cX^{s,p}(\Omega)$, and suppose that there exists $\beta > 0$ such that for every $\by \in  \widetilde{\Omega} \setminus \overline{\Omega}$
\begin{equation*}
\distt( \by, \suppp \bv ) \geq \beta > 0\,.
\end{equation*}
Then the function $\widetilde{\bv} : \widetilde{\Omega} \to \bbR^d$ defined to be the extension of $\bv$ by ${\bf 0}$ on $\widetilde{\Omega} \setminus \Omega$ belongs to $\cX^{s,p}(\widetilde{\Omega})$ with
\begin{equation}\label{eq:ExtensionNearBdy}
[\widetilde{\bv}]_{\cX^{s,p}(\widetilde{\Omega})} \leq C \left( [\bv]_{\cX^{s,p}(\Omega)} + \Vnorm{\bv}_{L^p(\Omega)} \right)\,,
\end{equation}
where $C=C(d,s,p,\Omega)$.
\end{lemma}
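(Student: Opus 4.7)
The plan is to exploit the fact that $\widetilde{\bv}$ vanishes off $\Omega$ by partitioning the integration domain in the definition of $[\widetilde{\bv}]_{\cX^{s,p}(\widetilde{\Omega})}^p$ according to whether each of $\bx$, $\by$ lies in $\Omega$ or in $\widetilde{\Omega} \setminus \Omega$. Writing out the resulting three double integrals with the usual projected-difference kernel
\[
K(\bx,\by) := \frac{\bigl| \bigl(\widetilde{\bv}(\bx) - \widetilde{\bv}(\by)\bigr) \cdot \tfrac{\bx-\by}{|\bx-\by|} \bigr|^p}{|\bx-\by|^{d+sp}},
\]
the piece over $\Omega \times \Omega$ equals $[\bv]_{\cX^{s,p}(\Omega)}^p$ by definition, and the piece over $(\widetilde{\Omega} \setminus \Omega) \times (\widetilde{\Omega} \setminus \Omega)$ vanishes since $\widetilde{\bv} \equiv {\bf 0}$ there. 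Only the two symmetric cross-terms require actual work.

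For the cross piece, with $\bx \in \Omega$ and $\by \in \widetilde{\Omega} \setminus \Omega$ we have $\widetilde{\bv}(\by) = {\bf 0}$, so the projected difference collapses to $\bigl|\bv(\bx) \cdot \tfrac{\bx-\by}{|\bx-\by|}\bigr| \leq |\bv(\bx)|$. The key step is to observe that the integrand vanishes whenever $\bx \notin \suppp \bv$, while on $\suppp \bv$ the hypothesis $\distt(\by, \suppp \bv) \geq \beta$ forces $|\bx - \by| \geq \beta$. Consequently, the cross integral is bounded by
\[
\intdm{\Omega}{|\bv(\bx)|^p \left( \intdm{\{\by \in \bbR^d \,:\, |\bx-\by| \geq \beta\}}{|\bx-\by|^{-(d+sp)}}{\by} \right)}{\bx} \leq \frac{C(d,sp)}{\beta^{sp}} \Vnorm{\bv}_{L^p(\Omega)}^p,
\]
where the inner radial integral is evaluated explicitly. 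Adding the three contributions and taking $p$-th roots then gives \eqref{eq:ExtensionNearBdy}.

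The argument is essentially routine; its only substance lies in the observation that the support-separation $\beta$ keeps the two variables uniformly apart on the cross region of integration, which is precisely what renders the otherwise singular kernel integrable there. I would note that the resulting constant depends on $\beta$ in addition to $d$, $s$, and $p$, even though the statement as written attributes $C$ only to $d$, $s$, $p$, and $\Omega$; this dependence is presumably folded into the ambient geometric setup associated with $\Omega$.
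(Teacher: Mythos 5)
Your proof is correct and follows essentially the same decomposition as the paper's: split the double integral according to whether each variable lies in $\Omega$, note the piece over $(\widetilde{\Omega}\setminus\Omega)^2$ vanishes, bound the projected difference on the cross-region by $|\bv(\bx)|$, and use the support-separation $\beta > 0$ to render the $\by$-integral finite. Your explicit evaluation yielding a $\beta^{-sp}$ constant, and your closing remark that $C$ in fact depends on $\beta$ (not only on $d,s,p,\Omega$ as stated), are both on the mark; the paper's proof has the same hidden $\beta$-dependence, which in the applications is absorbed into the $\Omega$-dependence through the fixed covering and partition of unity.
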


\begin{proof}
Define $K := \suppp \bv$. Then
\begin{equation*}
\begin{split}
[\widetilde{\bv}]_{\cX^{s,p}(\widetilde{\Omega})}^p &\leq [\widetilde{\bv}]_{\cX^{s,p}(\Omega)}^p + 2 \iintdm{\Omega}{\widetilde{\Omega} \setminus \Omega}{\frac{|\bv(\bx)|^p}{|\bx-\by|^{d+sp}}}{\by}{\bx} \\
	&= [\widetilde{\bv}]_{\cX^{s,p}(\Omega)}^p + 2 \iintdm{K}{\widetilde{\Omega} \setminus \Omega}{\frac{|\bv(\bx)|^p}{|\bx-\by|^{d+sp}}}{\by}{\bx} \\
	&\leq [\widetilde{\bv}]_{\cX^{s,p}(\Omega)}^p + 2 \intdm{K}{|\bv(\bx)|^p}{\bx} \intdm{\widetilde{\Omega} \setminus \Omega}{\frac{1}{\distt(\by, \p K)^{d+sp}}}{\by} \\
\end{split}
\end{equation*}
Since $K$ is compact we can replace $\distt(\by, \p K)$ with $\distt(\by, K)$. The resulting $\by$-integral is finite since $\beta > 0$ and $d+sp > d$, and therefore \eqref{eq:ExtensionNearBdy} is proved.
\end{proof}

\begin{remark}\label{rmk:LemmasForWsp}
Note that both Lemma \ref{lma:TruncationNearBdy} and Lemma \ref{lma:ExtensionNearBdy} hold when the function space $\cX^{s,p}(\Omega)$ is replaced with $\big[ W^{s,p}(\Omega) \big]^d$.
\end{remark}

\begin{proof}[Proof of Theorem \ref{thm:KornsC1}]
We use a covering argument and the extension operator developed in Proposition \ref{ext-operator}. Choose an open set $\Omega_0$ and open spheres with centers on $\p \Omega$ denoted $\{ B_{r_j}(\by^j) \}_{j=1}^N$ such that $\Omega_0 \Subset \Omega$, and that $\Omega_j := \Omega \cap B_{r_j}(\by^j)$ together with $\Omega_0$ forms a cover of $\Omega$. For $j \geq 1$ define $T_j : B_{r_j}(\by_j) \to \bbR^d$ to be the operator consisting of the translation $\by_j \to {\bf 0}$ and a rotation such that $T_j( \p \Omega \cap B_{r_j}(\by_j) )$ coincides with part of the graph of a $C^1$ function $f_j : \bbR^{d-1} \to \bbR$ with bounded gradient. Set $Q_j = T_j(B_{r_j}(\by_j))$, and also define
\begin{equation*}
\begin{split}
Q_j^+ := \{ \bx \in Q_j \, : \, x_d > f_j(\bx') \}\,, &\qquad
Q_j^- := \{ \bx \in Q_j \, : \, x_d < f_j(\bx') \}\,, \\
K_j^+ := \{ \bx \in \bbR^d \, : \, x_d > f_j(\bx') \}\,, &\qquad
K_j^- := \{ \bx \in \bbR^d \, : \, x_d < f_j(\bx') \}\,.
\end{split}
\end{equation*}
Additionally, we choose $T_j$ so that $T_j(\Omega_j) = Q_j^+$. Note that $T_j$ is a bi-Lipschitz function, with Lipschitz constant depending only on $d$ and $\Omega$.
Since $\Omega$ is a $C^1$ domain we can choose $r_j$ so small that the resulting $f_j$ defining the graph domain has Lipschitz constant $\rmM_j$  that is as small as we wish. Let $\{ \vphi_j \}_{j=1}^N \subset \big[ C^{\infty}_c(\bbR^d) \big]^d$ be a $C^{\infty}$ partition of unity subordinate to the collection $\{ \Omega_j \}_{j=0}^N$, i.e.\ $\suppp(\vphi_j) \subset B_{r_j}(\by_j)$ with $\distt( \by, \suppp(\vphi_j)) > 0$ for every $\by \in \Omega \setminus \overline{\Omega_j} $ and $\textstyle \sum_{j=0}^N \vphi_j \equiv 1$ on $\Omega$. Define $\bu_j := \vphi_j \bu$. 

We consider $\bu_0$ first. Define $\widetilde{\bu}_0 : \bbR^d \to \bbR^d$ by
\begin{equation*}
\widetilde{\bu}_0(\bx) := 
\begin{cases}
\bu_0(\bx)\,, & \bx \in \Omega_0\,, \\
{\bf 0}\,, & \bx \in \bbR^d \setminus \Omega_0\,.
\end{cases}
\end{equation*}
Then by the fractional Korn-type inequality \cite{MengeshaScott2018Korn} proved for $\bbR^{d}$, Lemma \ref{lma:ExtensionNearBdy}, and Lemma \ref{lma:TruncationNearBdy}
\begin{equation}\label{eq:MainProofEstimate1}
\begin{split}
[\bu_0]_{W^{s,p}(\Omega)} \leq [\widetilde{\bu}_0]_{W^{s,p}(\bbR^d)} &\leq C [\widetilde{\bu}_0]_{\cX^{s,p}(\bbR^d)} \\
	&\EquationReference{\eqref{eq:ExtensionNearBdy}}{\leq} C \left( [\bu_0]_{\cX^{s,p}(\Omega_0)} + \Vnorm{\bu_0}_{L^p(\Omega_0)}\right) \\
	&\leq C \left( [\bu_0]_{\cX^{s,p}(\Omega)} + \Vnorm{\bu_0}_{L^p(\Omega)}\right) \EquationReference{\eqref{eq:TruncationNearBdy}}{\leq} C \left( [\bu]_{\cX^{s,p}(\Omega)} + \Vnorm{\bu}_{L^p(\Omega)}\right)\,,
\end{split}
\end{equation}
for some $C = C(d,s,p,\Omega)$. Now fix $j \in \{ 1, 2, \ldots, N \}$. Since $\distt( \by, \suppp(\vphi_j)) > 0$ for every $\by \in \Omega \setminus \overline{\Omega_j}$ and $\suppp(\vphi_j)$ is compact, we can use Lemma \ref{lma:ExtensionNearBdy} and Remark \ref{rmk:LemmasForWsp} to obtain the bound
\begin{equation}\label{eq:BoundaryEstimate1}
[\bu_j]_{W^{s,p}(\Omega)} \leq C \left( [\bu_j]_{W^{s,p}(\Omega_j)} + \Vnorm{\bu_j}_{L^p(\Omega_j)} \right)\,.
\end{equation}
Now since $T_j$ consists of a rotation and a translation, $\grad T_j$ is a constant rotation, with $T_j(\bx) - T_j(\by) = (\grad T_j) (\bx-\by)$. Therefore, writing $R_j := \grad T_j$, define $\bv_j(\bx) := R_j \bu_j(T_j^{-1}(\bx))$. Then $\bv_j \in W^{s,p}(Q_j^+)$, with
\begin{equation}\label{eq:BoundaryEstimate2}
[\bu_j]_{W^{s,p}(\Omega_j)}^p = C \iintdm{Q_j^+}{Q_j^+}{\frac{\big| R_j^{\intercal} \bv_j(\bx) - R_j^{\intercal} \bv_j(\by) \big|^p}{|T_j^{-1}(\bx)-T_j^{-1}(\by)|^{d+sp}}}{\by}{\bx} \leq C \iintdm{Q_j^+}{Q_j^+}{\frac{\big| \bv_j(\bx) - \bv_j(\by) \big|^p}{|\bx-\by|^{d+sp}}}{\by}{\bx}\,,
\end{equation}
since $T_j$ is bi-Lipschitz. Now define $\widetilde{\bv}_j : K_j^+ \to \bbR^d$ by
\begin{equation*}
\widetilde{\bv}_j(\bx) := 
\begin{cases}
\bv_j(\bx)\,, & \bx \in Q_j^+ \,, \\
{\bf 0}\,, & \bx \in K_j^+ \setminus Q_j^+\,.
\end{cases}
\end{equation*}
Then $\widetilde{\bv}_j \in C^1_c(K_j^+)$ and clearly
\begin{equation}\label{eq:BoundaryEstimate3}
[\bv_j]_{W^{s,p}(Q_j^+)} \leq [\widetilde{\bv}_j]_{W^{s,p}(K_j^+)}\,.
\end{equation}
Therefore by Theorem \ref{thm:KornsGraph},
\begin{equation}\label{eq:BoundaryEstimate4}
[\widetilde{\bv}_j]_{W^{s,p}(K_j^+)} \leq C [\widetilde{\bv}_j]_{\cX^{s,p}(K_j^+)}\,,
\end{equation}
where $C = C(d,s,p,\Omega)$.
It is clear that $\distt(\by, \suppp \widetilde{\bv}_j) > 0$ for every $\by \in K_j^+ \setminus \overline{Q}_j^+$. Therefore by Lemma \ref{lma:ExtensionNearBdy}
\begin{equation}\label{eq:BoundaryEstimate5}
[\widetilde{\bv}_j]_{\cX^{s,p}(K_j^+)} \leq C \left( [\bv_j]_{\cX^{s,p}(Q_j^+)} + \Vnorm{\bv_j}_{L^p(Q_j^+)} \right)\,.
\end{equation}
Then by changing coordinates,
\begin{equation}\label{eq:BoundaryEstimate6}
\begin{split}
[\bv_j]_{\cX^{s,p}(Q_j^+)}^p &= C \iintdm{\Omega_j}{\Omega_j}{ \frac{\left| \big( R_j \bu_j(\bx) - R_j \bu_j(\by) \big) \cdot \big( T_j(\bx) - T_j(\by) \big)  \right|^p}{|T_j(\bx) - T_j(\by)|^{d+sp+p}} }{\by}{\bx} \\
	&= C \iintdm{\Omega_j}{\Omega_j}{ \frac{\left| \big( R_j \bu_j(\bx) - R_j \bu_j(\by) \big) \cdot \big( R_j \bx - R_j \by \big)  \right|^p}{|R_j \bx - R_j \by|^{d+sp+p}} }{\by}{\bx} \\
	&\leq C \iintdm{\Omega_j}{\Omega_j}{ \frac{\left|R_j^{\intercal} R_j \big( \bu_j(\bx)-\bu_j(\by) \big) \cdot \big( \bx-\by \big)  \right|^p}{|\bx - \by|^{d+sp+p}} }{\by}{\bx} = C [\bu_j]_{\cX^{s,p}(\Omega_j)}^p\,,
\end{split}
\end{equation}
where $C = C(d,s,p,\Omega)$. By Lemma \ref{lma:TruncationNearBdy} and the remark following it, we obtain 
\begin{equation}\label{eq:BoundaryEstimate7}
[\bu_j]_{\cX^{s,p}(\Omega_j)}^p \leq C \left( [\bu]_{\cX^{s,p}(\Omega_j)}^p + \Vnorm{\bu}_{L^p(\Omega_j)}^p\right) \leq C \left( [\bu]_{\cX^{s,p}(\Omega)}^p + \Vnorm{\bu}_{L^p(\Omega)}^p\right)\,.
\end{equation}
Combining inequalities \eqref{eq:BoundaryEstimate1} through
\eqref{eq:BoundaryEstimate7} brings us to the estimate
\begin{equation}\label{eq:MainProofEstimate2}
[\bu_j]_{W^{s,p}(\Omega)} \leq C \left( [\bu]_{\cX^{s,p}(\Omega)} + \Vnorm{\bu}_{L^p(\Omega)} \right)\,.
\end{equation}
Therefore by \eqref{eq:MainProofEstimate1} and \eqref{eq:MainProofEstimate2}
\begin{equation*}
[\bu]_{W^{s,p}(\Omega)} = \left[ \sum_{j=0}^N \bu_j \right]_{W^{s,p}(\Omega)} \leq \sum_{j=0}^N [\bu_j]_{W^{s,p}(\Omega)} \leq C \left( [\bu]_{\cX^{s,p}(\Omega)} + \Vnorm{\bu}_{L^p(\Omega)} \right)\,,
\end{equation*}
which proves the theorem.
\end{proof}

We characterize the dependence of the constant on the domain. In particular, we look at an example of the simplest of bounded $C^1$ domains: the case $\Omega = B_r(\bx_0)$, for $r> 0$ and $\bx_0 \in \bbR^d$.

\begin{corollary}\label{thm:KornsWithScaling}
There exists a constant $C$ depending on $d$, $s$, $p$ and $B_1({\bf 0})$ such that for any ball $B_r(\bx_0)$ of radius $r>0$ centered at $\bx_0$,  
\begin{equation}
\begin{split}
 \iintdm{B_r(\bx_0)}{B_r(\bx_0)}{\frac{|\bu(\bx) - \bu(\by)|^p}{|\bx-\by|^{d+sp}}}{\by}{\bx}&\leq C\iintdm{B_r(\bx_0)}{B_r(\bx_0)}{\frac{\diffqbunorm^p}{|\bx-\by|^{d+sp}}}{\by}{\bx}  \\
	&\qquad + \frac{C}{r^{sp}} \intdm{B_r(\bx_0)}{|\bu(\bx)|^p}{\bx}
\end{split}
\end{equation}
for every $\bu \in \big[ C^1_c(B_r(\bx_0)) \big]^d$.
\end{corollary}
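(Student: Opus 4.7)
The plan is to obtain the scaled inequality from Theorem \ref{thm:KornsC1} applied on the fixed domain $B_1({\bf 0})$, transferring the result to $B_r(\bx_0)$ via translation and dilation. By translation invariance of all three seminorms, we may assume $\bx_0 = {\bf 0}$ and write $B_r := B_r({\bf 0})$. For $\bu \in [C^1_c(B_r)]^d$, set
\[
\bv(\by) := \bu(r \by), \qquad \by \in B_1({\bf 0}).
\]
Then $\bv \in [C^1_c(B_1({\bf 0}))]^d$, and Theorem \ref{thm:KornsC1} applied on $B_1({\bf 0})$ gives, after raising to the $p$-th power,
\[
|\bv|_{W^{s,p}(B_1)}^p \leq C \bigl( [\bv]_{\cX^{s,p}(B_1)}^p + \Vnorm{\bv}_{L^p(B_1)}^p \bigr),
\]
for some $C = C(d,s,p,B_1({\bf 0}))$, which in particular does not depend on $r$ or $\bx_0$.

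The next step is to compute the scaling of each seminorm under the substitution $\bx = r \by$. The Jacobian gives $\rmd \bx = r^d \rmd \by$, and $|r\by_1 - r \by_2| = r |\by_1 - \by_2|$. Direct computation yields
\[
|\bv|_{W^{s,p}(B_1)}^p = r^{sp - d} |\bu|_{W^{s,p}(B_r)}^p, \qquad \Vnorm{\bv}_{L^p(B_1)}^p = r^{-d} \Vnorm{\bu}_{L^p(B_r)}^p.
\]
The crucial observation for the projected-difference seminorm is that the unit vector $\dotbxy$ is invariant under the dilation $\bx \mapsto r\bx$, so the $\cX^{s,p}$ seminorm scales with exactly the same power as the full $W^{s,p}$ seminorm:
\[
[\bv]_{\cX^{s,p}(B_1)}^p = r^{sp - d} [\bu]_{\cX^{s,p}(B_r)}^p.
\]

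Substituting these three identities into the Korn inequality for $\bv$ and dividing through by $r^{sp - d}$ produces the claimed bound, with the factor $r^{-sp}$ appearing only in front of the $L^p$ term because the two seminorms share the scaling exponent while the $L^p$ norm differs from them by exactly $r^{sp}$. There is no real obstacle here: the content of the corollary is precisely that the constant in Theorem \ref{thm:KornsC1} is scale-covariant, and the only thing to check is the bookkeeping of the powers of $r$, together with the rotation/translation invariance of each of the three quantities involved, which reduces every ball $B_r(\bx_0)$ to the fixed reference domain $B_1({\bf 0})$.
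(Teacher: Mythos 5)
Your proposal is correct and takes essentially the same scaling argument as the paper; the only cosmetic difference is that the paper normalizes $\bv(\bx) := \bu(\bx_0 + r\bx)/r^s$ so that all three quantities pick up the same power $r^d$ before reverting coordinates, whereas you omit the $r^{-s}$ factor and instead divide through by $r^{sp-d}$ at the end — the bookkeeping is equivalent.
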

\begin{proof}
We use a scaling argument. Let $\bu \in \big[ C^1_c(B_r(\bx_0)) \big]^d$. Then the function $\bv(\bx) := \frac{\bu(\bx_0 + r \bx)}{r^s}$ belongs to $\big[ C^1_c(B_1({\bf 0})) \big]^d$, with 
$
r^d [ \bv ]_{\cX^{s,p}(B_1({\bf 0}))}^p = [ \bu ]_{\cX^{s,p}(B_r(\bx_0))}^p\,,$ and $ r^d [ \bv ]_{W^{s,p}(B_1({\bf 0}))}^p = [ \bu ]_{W^{s,p}(B_r(\bx_0))}^p\,,
$
by the natural change of coordinates.
By Theorem \ref{thm:KornsC1} on $B_1({\bf 0})$ and multiplying the resulting inequality through by $r^d$,
\begin{equation}\label{eq:KornBallProof2}
r^d [ \bv ]_{W^{s,p}(B_1({\bf 0}))}^p \leq C r^d [ \bv ]_{\cX^{s,p}(B_1({\bf 0}))}^p + C r^d \Vnorm{\bv}_{L^p(B_1({\bf 0}))}^p\,
\end{equation}
for $C = C(d,s,p,B_1({\bf 0}))$. By changing coordinates, \eqref{eq:KornBallProof2} becomes the desired inequality
\begin{equation*}
[ \bu ]_{W^{s,p}(B_r(\bx_0))}^p \leq C [ \bu ]_{\cX^{s,p}(B_r(\bx_0))}^p + Cr^{-sp} \Vnorm{\bu}_{L^p(B_r(\bx_0))}^p\,.
\end{equation*} That concludes the proof. 
\end{proof}

\section{An application of the fractional Korn's inequality}\label{sec:self-improving}
In this section we prove the high differentiability and higher integrability of solutions to \eqref{eq:PLaplaceSystem}. 
The proof of the theorem follows  the argument presented in \cite{kuusi2015}, which is summarized and explained in concise way in \cite{KuusiTuomo2014AfGl}. In fact, we will only present a proof of one result as the rest is done in \cite{kuusi2015} for $p=2$  and \cite{Scott-Mengesha-preprint} for general $p\geq 2$. The argument relies on a new fractional  Gehring lemma that was first proved in \cite{kuusi2015} for $p=2$. This same fractional Gehring lemma is verified to hold for general $p\geq 2$ in \cite{Scott-Mengesha-preprint} following the same line of proof as in \cite{kuusi2015} in relation to self-improving inequalities for double-phase equations.  For a given ${\bf u}\in \big[W^{s,p}(\bbR^d) \big]^d$, the fractional Gehring lemma \cite[Theorem 2.2]{KuusiTuomo2014AfGl} or \cite[Theorem 6.1]{kuusi2015} is applied to a {\em dual pair} $(\rmU, \nu)$ associated to ${\bf u}$ that satisfies a certain {\em reverse H\"older-type inequality} to 
prove the higher integrability of the function $\rmU$ with respect to the measure $\nu$. The dual pair associated to {\bf u} is defined as  $(\rmU, \nu)$, where  for $\epsilon $ sufficiently small, 
\begin{equation}\label{eq:UDefn}
\rmU(\bx,\by) := \frac{|\bu(\bx)-\bu(\by)|}{|\bx-\by|^{s+\veps}}\,\quad \text{and\,\,\,}\nu(\cB) := \int_\cB \frac{1}{|\bx-\by|^{d-\veps p}} \, \rmd \bx \, \rmd \by\,,
\end{equation}  
for any Lebesgue measurable subset $\cB\subseteq \bbR^{2d}$. 
One notices that for any $\bu \in \big[ L^p(\bbR^d) \big]^d$, for any $s\in(0, 1)$ and $p\in (1, \infty)$
\begin{equation*}
\bu \in \big[W^{s,p}(\bbR^d) \big]^d \qquad \text{ if and only if } \qquad \rmU \in L^p(\bbR^{2d}; \nu)\,.
\end{equation*}
As a consequence, once the fractional Gehring lemma  is applied to prove 
$\rmU \in L^{p + \delta}_{loc}(\bbR^{2d}; \nu)\,,$ for sufficiently small $\delta$ and $\epsilon$,  then for any $B\subseteq\bbR^{d}$, we have $\rmU \in L^{p + \delta}(B\times B; \nu)$. Rewriting the latter in terms of ${\bf u}$ we have that $\int_{B}\int_{B}{|{\bf u}
(\by) - {\bf u}(\bx)|^{p+\delta} \over {|\by-\bx|^{d + (p+\delta)s+ \delta\epsilon} }} \, \rmd \by \, \rmd \bx < \infty  $  which is equivalent to saying that ${\bf u}\in [W^{s+{ \delta\epsilon\over p+\delta},p+\delta}_{loc}(\bbR^{d})]^{d} $
which proves the higher integrability and higher differentiability result of Theorem \ref{thm:MainRegularityResult}. 

We emphasize that the fractional Gehring lemma can be applied to the dual pair $(\rmU, \nu)$  if the dual pair associated with ${\bf u}$ satisfies the reverse H\"older-type inequality. For the particular choice of {\bf u}  which is a solution of \eqref{eq:PLaplaceSystem}, this reverse H\"older-type inequality in turn is a consequence of a Cacciopoli-type inequality which directly uses the nonlocal system of equations. For the strongly coupled nonlinear system of nonlocal equations \eqref{eq:PLaplaceSystem}, this inequality is stated in the following.  

\begin{theorem}\label{thm:Caccioppoli}
Let $p\geq 2$, $0<s<1$, with $sp < n$ and $sp \neq 1$. 
Assume $\bu \in \big[ W^{s,p}(\bbR^d) \big]^d$ is a solution to \eqref{eq:PLaplaceSystem} satisfying \eqref{eq:PLaplaceWeakSoln} corresponding to $\bff\in [L^{p'_\ast}_{loc}(\bbR^{d})]^d$.  For $B = B_r(x_0) \subset \bbR^n$ be a ball, and let $ \psi \in C^{\infty}_c( B)$ such that $0 \leq \psi \leq 1$, $\suppp \psi \subset \frac{1}{2} B$ and $|\grad \psi| \leq \frac{C(d)}{r}$. Then
\begin{equation}\label{eq:CaccioppoliEstimate}
\begin{split}
\iintdm{B}{B}{\frac{| \psi(\bx) \bu(\bx) - \psi(\by)\bu(\by)|^p}{|\bx-\by|^{d+sp}}}{\by}{\bx} & \leq \frac{C}{r^{sp}} \intdm{B}{|\bu(\bx)|^p}{\bx} + C \intdm{\bbR^d \setminus B}{\frac{|\bu(\by)|^{p-1}}{|\bx_0 - \by|^{d+sp}}}{\by} \intdm{B}{|\bu(\bx)|}{\bx} \\
& + Cr^{d+sp'} \left( \fint_B |\bff(\bx)|^{p'_*} \, \rmd \bx \right)^{p'/p'_*}\,.
\end{split}
\end{equation}
for some $C = C(d,s,p,\Lambda) > 0$.
\end{theorem}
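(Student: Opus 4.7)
The plan is to test the weak formulation \eqref{eq:PLaplaceWeakSoln} against $\vphi = \psi^p \bu$ (admissible after a standard density argument) and exploit the symmetric Leibniz-type identity
\begin{equation*}
\cD(\psi^p \bu)(\bx,\by) = \tfrac{\psi^p(\bx)+\psi^p(\by)}{2}\cD(\bu)(\bx,\by) + \big(\psi^p(\bx)-\psi^p(\by)\big)\left( \tfrac{\bu(\bx)+\bu(\by)}{2} \right)\cdot \tfrac{\bx-\by}{|\bx-\by|}.
\end{equation*}
Substituting into $\cE_{p,A}(\bu,\psi^p\bu)$, the first summand yields a ``main'' term in which $|\cD(\bu)|^{p-2}\cD(\bu)$ contracts with itself to produce the nonnegative integrand $|\cD(\bu)|^p\,\tfrac{\psi^p(\bx)+\psi^p(\by)}{2}\,|\bx-\by|^{-d-sp}$. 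By \eqref{eq:ConditionsOnA} this dominates $[\psi\bu]_{\cX^{s,p}(B)}^p$ modulo a remainder of order $r^{-sp}\Vnorm{\bu}^p_{L^p(B)}$ (obtained by re-expanding $\cD(\psi\bu)$ via the same identity applied to $\psi$ and using $|\psi(\bx)-\psi(\by)|\leq Cr^{-1}|\bx-\by|$). The second summand of the identity produces an ``error'' term bounded pointwise by $|\cD(\bu)|^{p-1}(|\bu(\bx)|+|\bu(\by)|)\,|\psi^p(\bx)-\psi^p(\by)|\,|\bx-\by|^{-d-sp}$.

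Next I split $\bbR^{2d}=(B\times B)\cup(B\times B^c)\cup(B^c\times B)\cup(B^c\times B^c)$; the last region contributes nothing since $\psi^p\bu\equiv 0$ off $B$. On $B\times B$ I use the weight-preserving pointwise inequality $|\psi^p(\bx)-\psi^p(\by)|^p \leq C(\psi^p(\bx)+\psi^p(\by))^{p-1}|\psi(\bx)-\psi(\by)|^p$ together with Young's inequality with exponents $(p',p)$ to obtain
\begin{equation*}
|\cD(\bu)|^{p-1}(|\bu(\bx)|+|\bu(\by)|)|\psi^p(\bx)-\psi^p(\by)| \leq \veps|\cD(\bu)|^p(\psi^p(\bx)+\psi^p(\by)) + C_\veps r^{-p}|\bx-\by|^p(|\bu(\bx)|+|\bu(\by)|)^p.
\end{equation*}
The $\veps$-term is absorbed into the main term; the remainder integrates to $Cr^{-sp}\Vnorm{\bu}^p_{L^p(B)}$ after computing $\int_B |\bx-\by|^{p-d-sp}\rmd\by\lesssim r^{p-sp}$. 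On the mixed pieces $\tfrac{1}{2}B \times (\bbR^d\setminus B)$, the test function reduces to $\psi^p(\bx)\bu(\bx)$; the bound $|\cD(\bu)|^{p-1}\leq C(|\bu(\bx)|^{p-1}+|\bu(\by)|^{p-1})$ combined with the comparability $|\bx-\by|\geq c|\bx_0-\by|$ for $\bx\in\tfrac{1}{2}B$ and $\by\notin B$ yields exactly the nonlocal tail
\begin{equation*}
C\intdm{\bbR^d\setminus B}{\frac{|\bu(\by)|^{p-1}}{|\bx_0-\by|^{d+sp}}}{\by}\intdm{B}{|\bu(\bx)|}{\bx},
\end{equation*}
plus a further contribution absorbable into $r^{-sp}\Vnorm{\bu}^p_{L^p(B)}$.

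For the forcing term, Hölder's inequality with the conjugate pair $(p'_*,p^*)$, $p^*=dp/(d-sp)$, and the fractional Sobolev embedding $\Vnorm{\psi\bu}_{L^{p^*}(\bbR^d)}\leq C[\psi\bu]_{W^{s,p}(\bbR^d)}$ (valid since $\psi\bu$ has compact support in $B$) give
\begin{equation*}
\left| \intdm{\bbR^d}{\bff\cdot\psi^p\bu}{\bx}\right| \leq Cr^{d/p'_*}\left(\fint_B|\bff|^{p'_*}\,\rmd\bx\right)^{1/p'_*}[\psi\bu]_{W^{s,p}(\bbR^d)}.
\end{equation*}
Because $\distt(\suppp\psi,B^c)\geq r/2$, the difference $[\psi\bu]^p_{W^{s,p}(\bbR^d)}-[\psi\bu]^p_{W^{s,p}(B)}$ is again bounded by $Cr^{-sp}\Vnorm{\bu}^p_{L^p(B)}$, and the scaled Korn inequality (Corollary \ref{thm:KornsWithScaling}) bounds $[\psi\bu]^p_{W^{s,p}(B)}\leq C\big([\psi\bu]^p_{\cX^{s,p}(B)}+r^{-sp}\Vnorm{\bu}^p_{L^p(B)}\big)$. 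Applying Young's inequality with exponents $(p',p)$ converts the forcing estimate into $\veps[\psi\bu]^p_{\cX^{s,p}(B)} + C_\veps r^{d+sp'}\bigl(\fint_B|\bff|^{p'_*}\rmd\bx\bigr)^{p'/p'_*}$ via the identity $p'd/p'_* = d+sp'$. Collecting all $\veps$-terms on the left-hand side and invoking Korn's inequality one last time to upgrade $[\psi\bu]_{\cX^{s,p}(B)}$ to the Gagliardo seminorm appearing in \eqref{eq:CaccioppoliEstimate} closes the estimate. The principal technical obstacle I anticipate is the algebraic handling of the error term: because the Leibniz decomposition of $\cD(\psi^p\bu)$ introduces the pointwise factor $\bu(\bx)+\bu(\by)$ that is \emph{not} controlled by $\cD(\bu)$ alone, the weight-preserving inequality $|\psi^p(\bx)-\psi^p(\by)|^p\leq C(\psi^p(\bx)+\psi^p(\by))^{p-1}|\psi(\bx)-\psi(\by)|^p$ is essential for making the $\veps$-absorption succeed while producing precisely the $r^{-sp}$ scaling of the leftover $L^p$-norm of $\bu$ required by the statement.
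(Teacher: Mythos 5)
Your proposal is correct and follows the same overall architecture as the paper's proof: test \eqref{eq:PLaplaceWeakSoln} against $\psi^p\bu$, extract a nonnegative main term proportional to $|\cD(\bu)|^p$ weighted by the cutoff, control the cutoff commutator by Young's inequality, estimate the nonlocal tail and the forcing term, and close the argument by upgrading $[\psi\bu]_{\cX^{s,p}(B)}$ to the full Gagliardo seminorm via the scaled Korn inequality of Corollary \ref{thm:KornsWithScaling} — the one ingredient genuinely specific to the system setting. The single point of technical divergence is how $\cD(\psi^p\bu)$ is decomposed. You use the symmetric Leibniz identity
\begin{equation*}
\cD(\psi^p\bu)(\bx,\by) = \tfrac{\psi^p(\bx)+\psi^p(\by)}{2}\,\cD(\bu)(\bx,\by) + \big(\psi^p(\bx)-\psi^p(\by)\big)\,\tfrac{\bu(\bx)+\bu(\by)}{2}\cdot\dotbxy\,,
\end{equation*}
whereas the paper adds and subtracts the asymmetric cross term $\psi^p(\bx)\,\bu(\by)\cdot\dotbxy$ (or its mirror $\psi^p(\by)\,\bu(\bx)\cdot\dotbxy$), handling the resulting sign ambiguity by splitting into the cases $\psi(\bx)\geq\psi(\by)$ and $\psi(\by)\geq\psi(\bx)$; see the derivation of \eqref{eq:Caccioppoli:IEstimate4}--\eqref{eq:Caccioppoli:IEstimate5}. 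The two decompositions are equivalent (yours is the average of the paper's two asymmetric choices), but your symmetric version avoids the case analysis at the cost of having to push the weight $\psi^p(\bx)+\psi^p(\by)$ through Young's inequality. You correctly identify the pointwise bound $|\psi^p(\bx)-\psi^p(\by)|^p \leq C\,(\psi^p(\bx)+\psi^p(\by))^{p-1}|\psi(\bx)-\psi(\by)|^p$ as the device that makes this weight-transfer — and hence the $\veps$-absorption into the main term — succeed; this plays exactly the role that the paper's inequality \eqref{eq:Caccioppoli:IEstimate2} and the subsequent Young step \eqref{eq:Caccioppoli:IEstimate3} play in the asymmetric scheme. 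For the forcing term, invoking the global fractional Sobolev embedding on $\bbR^d$ for the compactly supported field $\psi\bu$, plus a support estimate to pass from $[\psi\bu]_{W^{s,p}(\bbR^d)}$ to $[\psi\bu]_{W^{s,p}(B)}$, is an admissible alternative to the paper's use of the Poincar\'e--Sobolev inequality on $B$ (Lemma \ref{thm:SobolevInequality}), and the exponent bookkeeping $p'\,d/p'_* = d+sp'$ you supply is correct. Modulo constants left unspecified (as is reasonable in a proposal), the argument goes through and yields \eqref{eq:CaccioppoliEstimate}.
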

Once we prove the above theorem, then deriving the reverse H\"older-type inequality for the dual pair $(\rmU, \nu)$ associated to a solution ${\bf u}$ to \eqref{eq:PLaplaceSystem} can be done in exactly the same way as in \cite{kuusi2015} and \cite{Scott-Mengesha-preprint}. 
The Caccioppoli-type inequality stated in Theorem \ref{thm:Caccioppoli} is therefore the only missing result that one needs to prove Theorem \ref{thm:MainRegularityResult}. Since the inequality relies on the fact that ${\bf u}$ is a solution to the strongly coupled equation \ref{eq:PLaplaceSystem}, the proof of this inequality will -  unlike the proof of the Caccioppoli inequality for scalar nonlocal equations - use the fractional Korn's inequality. In addition the proof needs the following standard result concerning Sobolev spaces. 
\begin{lemma}[Fractional Poincar\'e-Sobolev Inequality]\label{thm:SobolevInequality}
Let $q \in [1,\infty)$, $0<t<1$. Let $B = B_r(\bx_0)$ for some $r>0$, $\bx_0 \in \bbR^d$. Then there exists $C = C(d,s) > 0$ such that 
\begin{equation}
\left( \fint_B \left| \frac{\bv(\bx)}{r^t} \right|^{q^*} \, \rmd \bx \right)^{1/q^*} \leq C \left( \int_B \fint_B \frac{|\bv(\bx)-\bv(\by)|^q}{|\bx-\by|^{d+tq}} \, \rmd \by \, \rmd \bx \right)^{1/q}
\end{equation}
for every $\bv \in \big[ W_0^{t,q}(B) \big]^d$, where  $q^* = {dq\over d-tq }$ is the Sobolev conjugate of $q$. 
\end{lemma}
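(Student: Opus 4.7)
The plan is to reduce to the unit ball via a standard scaling argument, then to derive the inequality from the classical fractional Sobolev embedding on all of $\bbR^d$ after extending $\bv$ by zero, controlling the resulting tail term via a fractional Hardy inequality.

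First I would verify that the desired inequality is scale-invariant: the natural substitution $\bw(\by) := \bv(\bx_0 + r \by)$ sends $W^{t,q}_0(B_r(\bx_0))$ to $W^{t,q}_0(B_1({\bf 0}))$, and a direct change of variables shows that both sides of the inequality scale identically in $r$, so it suffices to treat $B = B_1({\bf 0})$. With this reduction, the inequality to be proved becomes simply $\Vnorm{\bw}_{L^{q^*}(B_1)} \leq C [\bw]_{W^{t,q}(B_1)}$ for every $\bw \in [W^{t,q}_0(B_1)]^d$.

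Next, by density it is enough to consider $\bw \in [C^\infty_c(B_1)]^d$. Let $\widetilde{\bw}$ be the extension of $\bw$ by $\mathbf{0}$ to $\bbR^d$, which is then a compactly supported smooth vector field. The standard fractional Sobolev embedding on $\bbR^d$ (see e.g.\ Di Nezza--Palatucci--Valdinoci) gives $\Vnorm{\widetilde{\bw}}_{L^{q^*}(\bbR^d)} \leq C [\widetilde{\bw}]_{W^{t,q}(\bbR^d)}$. The left-hand side equals $\Vnorm{\bw}_{L^{q^*}(B_1)}$, and the right-hand side splits as
\begin{equation*}
[\widetilde{\bw}]_{W^{t,q}(\bbR^d)}^q = [\bw]_{W^{t,q}(B_1)}^q + 2 \iintdm{B_1}{\bbR^d \setminus B_1}{\frac{|\bw(\bx)|^q}{|\bx-\by|^{d+tq}}}{\by}{\bx}\,.
\end{equation*}
Hence it remains to absorb the tail integral into the Gagliardo seminorm on $B_1$.

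For the tail term, I would use the pointwise bound $\int_{\bbR^d \setminus B_1} |\bx - \by|^{-d-tq}\, \rmd \by \leq C\, \distt(\bx, \p B_1)^{-tq}$ valid for $\bx \in B_1$, reducing the task to estimating $\int_{B_1} |\bw(\bx)|^q / \distt(\bx, \p B_1)^{tq}\, \rmd \bx$. This is precisely the content of the fractional Hardy inequality on $B_1$ for $W^{t,q}_0$ functions, which bounds this weighted integral by a constant times $[\bw]_{W^{t,q}(B_1)}^q$. Combining with the previous step and undoing the scaling yields the lemma. The main obstacle is this last Hardy step: it is standard but does require $tq \neq 1$ in general, so one either invokes a known reference that covers the critical case by approximation arguments or restricts tacitly to $tq \neq 1$ (which is consistent with the hypothesis $sp \neq 1$ in the surrounding application).
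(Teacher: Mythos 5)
Since the paper states this lemma without proof (it is invoked as a standard fact and only ever applied to $\psi\bu$ with $\suppp \psi \subset \frac12 B$), your argument can only be checked on its own merits. The scaling reduction to $B_1({\bf 0})$ and the zero-extension combined with the fractional Sobolev embedding on $\bbR^d$ are fine. The genuine gap is the Hardy step: the fractional Hardy inequality $\intdm{B_1}{|\bw(\bx)|^q \distt(\bx,\partial B_1)^{-tq}}{\bx} \leq C\,[\bw]_{W^{t,q}(B_1)}^q$ for $\bw \in C^\infty_c(B_1)$ holds only when $tq>1$; it fails for every $tq<1$, not merely at the critical value $tq=1$ that you flag, and no approximation argument rescues it. Indeed, for $tq \leq 1$ the space $C^\infty_c(B_1)$ is dense in $W^{t,q}(B_1)$, so $W^{t,q}_0(B_1)$ contains the nonzero constants, for which the right-hand side of the Hardy inequality --- and of the lemma as literally stated --- vanishes while the left-hand side does not. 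So your absorption step cannot close the proof on the stated range $q\in[1,\infty)$, $t\in(0,1)$, $tq<d$; in that range the statement itself requires either $tq>1$ or an extra restriction such as a support condition on $\bv$, which is exactly the situation in which the paper uses it.

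The repair consistent with that use avoids Hardy entirely and covers all $tq<d$, including $tq\leq 1$. If $\bw \in C^\infty_c(B_1)$ is supported in $\frac12 B_1$, the tail term in your splitting is at most $C(d,t,q)\Vnorm{\bw}_{L^q(B_1)}^q$, because $|\bx-\by|\geq \frac12$ whenever $\bx \in \frac12 B_1$ and $\by \in \bbR^d\setminus B_1$; on the other hand,
\begin{equation*}
[\bw]_{W^{t,q}(B_1)}^q \;\geq\; \iintdm{\frac12 B_1}{B_1\setminus \frac34 B_1}{\frac{|\bw(\bx)|^q}{|\bx-\by|^{d+tq}}}{\by}{\bx} \;\geq\; c(d,t,q)\, \Vnorm{\bw}_{L^q(B_1)}^q\,,
\end{equation*}
since $\bw \equiv {\bf 0}$ on $B_1\setminus\frac34 B_1$ and $|\bx-\by|\leq 2$ there. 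Hence the tail is absorbed into the Gagliardo seminorm with no restriction $tq\neq 1$, and undoing the scaling gives the inequality in the form the paper actually needs. If one insists on the lemma for all of $W^{t,q}_0(B)$, one must either assume $tq>1$ (then your Hardy route is correct) or add a zero-order term such as $r^{-t}\bigl(\fint_B |\bv(\bx)|^q \,\rmd\bx\bigr)^{1/q}$ to the right-hand side.
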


\begin{proof}[Proof of Theorem \ref{thm:Caccioppoli}]
Since $\big[ C^{\infty}_c(\bbR^d) \big]^d$ is dense in $\big[ W^{s,p}(\bbR^d) \big]^d$ the choice of $\psi^p(\bx) \bu(\bx)$ as the test function in \eqref{eq:PLaplaceWeakSoln} is valid. Testing the system by $\psi^p(\bx) \bu(\bx)$ we have that $\cE_{p,A}(\bu,\psi^p \bu) = \intdm{B}{\psi^p(\bx) \bff(\bx) \cdot \bu(\bx)}{\bx}.$ Writing $\cE_{p,A}(\bu,\psi^p \bu)  = \rmI + \mathrm{II} $ where 
\begin{equation*}
\begin{split}
\rmI&= \iintdm{B}{B}{\frac{A(\bx,\by)}{|\bx-\by|^{d+sp}} |\cD(\bu)(\bx,\by)|^{p-2} \cD(\bu)(\bx,\by) \, \cD(\psi^p \bu)(\bx,\by)}{\by}{\bx} \\
	\mathrm{II}&= 2\iintdm{B}{\bbR^d \setminus B}{\frac{A(\bx,\by)}{|\bx-\by|^{d+sp}} |\cD(\bu)(\bx,\by)|^{p-2} \cD(\bu)(\bx,\by) \, \psi^p(\bx) \left( \bu(\bx) \cdot \dotbxy\right)}{\by}{\bx} \\
\end{split}
\end{equation*}
we will estimate each term separately, then collect terms.

\noindent {\bf  Estimate of $\rmI$.} We assume first that $\psi(\bx) \geq \psi(\by)$. By adding and subtracting $\psi^p(\bx) \bu(\by) \cdot \dotbxy$,
\begin{equation}\label{eq:Caccioppoli:IEstimate1}
\begin{split}
|\cD(\bu)(\bx,\by)|^{p-2} & \cD(\bu)(\bx,\by) \cD(\psi^p \bu)(\bx,\by) \\
	&= \psi^p(\bx) |\cD(\bu)(\bx,\by)|^p
	+ \big( \psi^p(\bx) - \psi^p(\by) \big) | \cD(\bu)(\bx,\by)|^{p-2} \cD(\bu)(\bx,\by) \left( \bu(\by) \cdot \dotbxy \right) \\
	&= \psi^p(\bx) |\cD(\bu)(\bx,\by)|^p + \rmR_1\,.
\end{split}
\end{equation}
We will bound $\rmR_1$ from below. By the assumption $\psi(\bx) \geq \psi(\by)$ we have that for some $\sigma\in [0, 1]$ 
\begin{equation}\label{eq:Caccioppoli:IEstimate2}
\begin{split}
\psi^p(\bx) - \psi^p(\by) &= p \big( \sigma \psi(\bx) + (1-\sigma) \psi(\by) \big)^{p-1} (\psi(\bx) - \psi(\by)) \\
	&\geq - p \big| \sigma \psi(\bx) + (1-\sigma) \psi(\by) \big|^{p-1} |\psi(\bx) - \psi(\by)| \geq - p |\psi(\bx)|^{p-1} |\psi(\bx) - \psi(\by)|\,. 
\end{split}
\end{equation}
Then using \eqref{eq:Caccioppoli:IEstimate2} and Young's Inequality,
\begin{equation}\label{eq:Caccioppoli:IEstimate3}
\begin{split}
\rmR_1 &= p \big( \sigma \psi(\bx) + (1-\sigma) \psi(\by) \big)^{p-1} (\psi(\bx) - \psi(\by)) | \cD(\bu)(\bx,\by)|^{p-2} \cD(\bu)(\bx,\by) \left( \bu(\by) \cdot \dotbxy \right) \\
	&\geq - p | \psi(\bx) |^{p-1} |\psi(\bx) - \psi(\by)| |\cD(\bu)(\bx,\by)|^{p-1} |\bu(\by)|  \\
	&\geq - \frac{1}{p'} \psi^p(\bx) |\cD(\bu)(\bx,\by)|^p - p^{p-1} |\psi(\bx)-\psi(\by)|^p |\bu(\by)|^p
\end{split}
\end{equation}
Combining \eqref{eq:Caccioppoli:IEstimate1} and \eqref{eq:Caccioppoli:IEstimate3} gives
\begin{equation}\label{eq:Caccioppoli:IEstimate4}
|\cD(\bu)(\bx,\by)|^{p-2} \cD(\bu)(\bx,\by) \cD(\psi^p \bu)(\bx,\by) \geq C \psi^p(\bx) |\cD(\bu)(\bx,\by)|^p - C' |\psi(\bx)-\psi(\by)|^p |\bu(\by)|^p
\end{equation}
in the case that $\psi(\bx) \geq \psi(\by)$. Now we assume that $\psi(\by) \geq \psi(\bx)$. By adding and subtracting $\psi^p(\by) \bu(\bx) \cdot \dotbxy$ and proceeding similarly to the first case,
\begin{equation}\label{eq:Caccioppoli:IEstimate5}
|\cD(\bu)(\bx,\by)|^{p-2} \cD(\bu)(\bx,\by) \cD(\psi^p \bu)(\bx,\by) \geq C \psi^p(\by) |\cD(\bu)(\bx,\by)|^p - C' |\psi(\bx)-\psi(\by)|^p |\bu(\bx)|^p\,.
\end{equation}
Using the lower bound on $A$, symmetry, and the estimates \eqref{eq:Caccioppoli:IEstimate4} and \eqref{eq:Caccioppoli:IEstimate5},
\begin{equation}\label{eq:Caccioppoli:IEstimate6}
\rmI \geq C \iintdm{B}{B}{\frac{|\cD(\bu)(\bx,\by)|^p}{|\bx-\by|^{d+sp}} \max \{\psi^p(\bx),\psi^p(\by) \} }{\by}{\bx} - C' \iintdm{B}{B}{\frac{|\psi(\bx) - \psi(\by)|^p}{|\bx-\by|^{d+sp}} |\bu(\bx)|^p }{\by}{\bx} \,,
\end{equation}
where $C = C(p,\Lambda)$. Finally, since
\begin{equation*}
 \left| \big( \psi(\bx) \bu(\bx) - \psi(\by) \bu(\by) \big) \cdot \dotbxy \right|^p \leq 2^{p-1} \psi^p(\by) \left| \big( \bu(\bx) - \bu(\by) \big) \cdot \dotbxy \right|^p + 2^{p-1} |\bu(\bx)|^p |\psi(\bx) - \psi(\by)|^p
\end{equation*}
we obtain
\begin{equation}\label{eq:Caccioppoli:IEstimate7-0}
\rmI \geq C \iintdm{B}{B}{\frac{|\cD(\psi \bu)(\bx,\by)|^p}{|\bx-\by|^{d+sp}} }{\by}{\bx} - C' \iintdm{B}{B}{\frac{|\psi(\bx) - \psi(\by)|^p}{|\bx-\by|^{d+sp}} |\bu(\bx)|^p }{\by}{\bx} \,.
\end{equation}
Now, since $|\grad \psi| \leq Cr^{-1}$ the second integral on the right-hand side of \eqref{eq:Caccioppoli:IEstimate7-0} can be estimates from below by 
\begin{equation}\label{eq:Caccioppoli:Piece1}
-C r^{-p} \int_B |\bu(\bx)|^p \int_{B}  |\bx-\by|^{-d+(1-s)p} \, \rmd \by \, \rmd \bx \leq -C r^{-sp} \int_B |\bu(\bx)|^p \, \rmd \bx\,.
\end{equation}
Therefore we have 
\begin{equation}\label{eq:Caccioppoli:IEstimate7}
\rmI \geq C \iintdm{B}{B}{\frac{|\cD(\psi \bu)(\bx,\by)|^p}{|\bx-\by|^{d+sp}} }{\by}{\bx}-C r^{-sp} \int_B |\bu(\bx)|^p \, \rmd \bx. 
\end{equation}
\noindent{\bf Estimate of $\mathrm{II}$.}
We begin by directly estimating as 
\[
|\cD(\bu)(\bx,\by)|^{p-2} \cD(\bu)(\bx,\by) \psi^p(\bx) \left( \bu(\bx) \cdot \dotbxy \right) \geq - |\cD(\bu)(\bx,\by)|^{p-1} \psi^p(\bx)  |\bu(\bx)|\]
	Since $p\geq 2$, using the inequality $(a+b)^{p-1}\leq 2^{p-2}(a^{p-1} + b^{p-1})$, we have 
	\begin{equation}\label{eq:Caccioppoli:IEstimate8}
	|\cD(\bu)(\bx,\by)|^{p-2} \cD(\bu)(\bx,\by) \psi^p(\bx) \left( \bu(\bx) \cdot \dotbxy \right) \geq -2^{p-2}(|\psi(\bx)\bu(\bx)|^{p} +\psi^p(\bx) |\bu(\bx)| |\bu(\by)|^{p-1})
	\end{equation}
Therefore,
\begin{equation}\label{eq:Caccioppoli:IEstimate9}
\mathrm{II} \geq - \frac{C}{\Lambda} \iintdm{B}{\bbR^d \setminus B}{\psi^p(\bx) \frac{|\bu(\bx)|^{p}  + |\bu(\by)|^{p-1}|\bu(\bx)|}{|\bx-\by|^{d+sp}}}{\by}{\bx}\,.
\end{equation}
we have that 
\begin{equation*}
\frac{|\bx_0-\by|}{|\bx-\by|} \leq \frac{|\bx_0 - \bx| + |\bx-\by|}{|\bx-\by|} = 1 + \frac{|\bx_0 - \bx|}{|\bx-\by|} \leq 2\,.
\end{equation*}
Thus we can replace $|\bx-\by|$ with $|\bx_0 - \by|$ in \eqref{eq:Caccioppoli:IEstimate9} to obtain the inequality 
\begin{equation}\label{eq:Caccioppoli:IEstimate10}
\begin{split}
\mathrm{II} &\geq - C \iintdm{B}{\bbR^d \setminus B}{\psi^p(\bx) \frac{|\bu(\bx)|^{p} + |\bu(\by)|^{p-1}|\bu(\bx)|}{|\bx_0-\by|^{d+sp}}}{\by}{\bx} \\
	&= - {C\over r^{ps}} \intdm{B}{\psi^p(\bx) |\bu(\bx)|^{p} }{\bx} 
	 - C \intdm{B}{\psi^p(\bx) |\bu(\bx)| }{\bx} \intdm{\bbR^d \setminus B}{\frac{|\bu(\by)|^{p-1}}{|\bx_0-\by|^{d+sp}}}{\by}\,.
\end{split}
\end{equation}
where we have used the fact that $0\leq \psi\leq 1$ and $\int_{\bbR^{d}\setminus B}|\bx_0-\bx|^{-d-ps}d\bx = C r^{-ps.}$

Finally we estimates the right hand side $\intdm{B}{\psi^p(\bx) \bff(\bx) \cdot \bu(\bx)}{\bx}$. To that end,  
by H\"older's inequality using the fact that $p^\ast$ and $p'_\ast$ are H\"older conjugates we have  
\begin{equation*}
\begin{split}
\intdm{B}{\psi^p(\bx) \bff(\bx) \cdot \bu(\bx)}{\bx} &\leq \left( \intdm{B}{|\psi(\bx) \bu(\bx)|^{p^*}}{\bx} \right)^{1/p^*} \left( \intdm{B}{|\bff(\bx)|^{p'_*}}{\bx} \right)^{1/p'_*} \\
	& = r^d \left( \fint_B |\psi(\bx) \bu(\bx)|^{p^*} \, \rmd \bx \right)^{1/p^*} \left( \fint_B |\bff(\bx)|^{p'_*} \, \rmd \bx \right)^{1/p'_*}\,.
\end{split}
\end{equation*}
Using the Sobolev-Poincar\'e inequality (Theorem  \ref{thm:SobolevInequality}) on $\psi \bu$, we arrive at the estimate 
\begin{equation*}
\begin{split}
\intdm{B}{\psi^p(\bx) \bff(\bx) \cdot \bu(\bx)}{\bx} &\leq C r^{d/p'+s} \left( \int_B \int_B \frac{|\psi(\bx) \bu(\bx) - \psi(\by) \bu(\by)|^p}{|\bx-\by|^{d+sp}} \, \rmd \by \, \rmd \bx \right)^{1/p} \left( \fint_B |\bff(\bx)|^{p'_*} \, \rmd \bx \right)^{1/p'_*}\,.
\end{split}
\end{equation*}
By Young's inequality with $\sigma \in (0,1)$ suitably small, 
\begin{equation}\label{eq:Caccioppoli:IEstimate12}
\intdm{B}{\psi^p(\bx) \bff(\bx) \cdot \bu(\bx)}{\bx}\leq \frac{C}{\sigma} r^{d+sp'} \left( \fint_B |\bff(\bx)|^{p'_*} \, \rmd \bx \right)^{p'/p'_*} + \sigma \int_B \int_B \frac{|\psi(\bx) \bu(\bx) - \psi(\by) \bu(\by)|^p}{|\bx-\by|^{d+sp}} \, \rmd \by \, \rmd \bx\,.
\end{equation}

Putting together \eqref{eq:Caccioppoli:IEstimate7}, \eqref{eq:Caccioppoli:IEstimate10}, 
and \eqref{eq:Caccioppoli:IEstimate12}, there exists $C = C(d,s,p,\Lambda)$ and an arbitrarily small $\sigma \in (0,1)$ such that
\begin{equation}\label{eq:Caccioppoli:MajorEstimate1}
\begin{split}
\iintdm{B}{B}{\frac{|\cD(\psi \bu)(\bx,\by)|^p}{|\bx-\by|^{d+sp}} }{\by}{\bx} 
&\leq C r^{-ps} \intdm{B}{ |\bu(\bx)|^{p} }{\bx}  + C r^{d+sp'} \left( \fint_B |\bff(\bx)|^{p'_*} \, \rmd \bx \right)^{p'/p'_*} \\
	& +C \intdm{B}{\psi^p(\bx) |\bu(\bx)| }{\bx} \intdm{\bbR^d \setminus B}{\frac{|\bu(\by)|^{p-1}}{|\bx_0-\by|^{d+sp}}}{\by}\\
	&+  \sigma \int_B \int_B \frac{|\psi(\bx) \bu(\bx) - \psi(\by) \bu(\by)|^p}{|\bx-\by|^{d+sp}} \, \rmd \by \, \rmd \bx\,.
\end{split}
\end{equation}
We can now apply fractional  Korn's inequality for balls on $\psi \bu$ Corollary \ref{thm:KornsWithScaling} to obtain 
\begin{equation}\label{eq:Caccioppoli:Piece2}
C  \int_B \int_B \frac{|\psi(\bx) \bu(\bx) - \psi(\by) \bu(\by)|^p}{|\bx-\by|^{d+sp}} \, \rmd \by \, \rmd \bx - r^{-sp} \intdm{B}{ |{\psi} (\bx)\bu(\bx)|^{p} }{\bx} \leq\iintdm{B}{B}{\frac{|\cD(\psi \bu)(\bx,\by)|^p}{|\bx-\by|^{d+sp}} }{\by}{\bx} \,,
\end{equation}
where $C = C(d,s,p)$ does not depend on $r$.
Using \eqref{eq:Caccioppoli:Piece1} and \eqref{eq:Caccioppoli:Piece2} in \eqref{eq:Caccioppoli:MajorEstimate1} gives
\begin{equation}\label{eq:Caccioppoli:MajorEstimate2}
\begin{split}
C \iintdm{B}{B}{\frac{|\psi(\bx) \bu(\bx) - \psi(\by) \bu(\by)|^p}{|\bx-\by|^{d+sp}} }{\by}{\bx} &\leq  C\,r^{-sp} \intdm{B}{|\bu(\bx)|^p}{\bx} +C r^{d+sp'} \left( \fint_B |\bff(\bx)|^{p'_*} \, \rmd \bx \right)^{p'/p'_*} \\
	&+ C \intdm{B}{\psi^p(\bx) |\bu(\bx)| }{\bx} \intdm{\bbR^d \setminus B}{\frac{|\bu(\by)|^{p-1}}{|\bx_0-\by|^{d+sp}}}{\by}\\
	&+  \sigma \int_B \int_B \frac{|\psi(\bx) \bu(\bx) - \psi(\by) \bu(\by)|^p}{|\bx-\by|^{d+sp}} \, \rmd \by \, \rmd \bx\,.
\end{split}
\end{equation}
Since $\sigma \in (0,1)$ can be as small as we wish, we can absorb the last term on the right-hand side of \eqref{eq:Caccioppoli:MajorEstimate2}, which proves the result.
\end{proof}

\appendix
\section{Technical Lemmas}

\begin{lemma}\label{lma:JBound}

Let $\rmM_0>0,$ and let $D$ be an epigraph supported by a Lipschitz function $f$ with Lipschitz constant $\rmM<\rmM_0$. Then for every $\bx \in D$
\begin{equation*}
\begin{split}
J(\bx) &:= \intdm{D}{\frac{|y_d-f(\bx')|^p}{\left( |\bx'-\by'|^2 + |(y_d - f(\bx')) + (x_d-f(\bx'))|^2 \right)^{\frac{d+(s+1)p}{2}}} }{\by} \leq \frac{C}{|x_d-f(\bx')|^{sp}}\,,
\end{split}
\end{equation*}
where $C$ is independent of $\rmM$ but depends on $\rmM_0$, $d$, $s$, and $p$.
\end{lemma}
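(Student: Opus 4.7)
The plan is to extract the desired power $|x_d - f(\bx')|^{-sp}$ by a single rescaling and then verify that the remaining dimensionless integral is finite uniformly for $\rmM \in (0, \rmM_0)$. Writing $a := x_d - f(\bx') > 0$ and introducing the new variables $v := y_d - f(\bx')$, $\bfxi' := \by' - \bx'$, and $r := |\bfxi'|$, the constraint $\by \in D$ becomes $v > g(\bfxi') := f(\bx' + \bfxi') - f(\bx')$. Since $|g(\bfxi')| \leq \rmM r$, enlarging the range of integration to $v > -\rmM r$ only increases $J(\bx)$. The further substitution $w = v + a$ simplifies the denominator to $(r^2 + w^2)^{\alpha}$, with $\alpha := (d + (s+1)p)/2$, and turns the numerator into $|w - a|^p$. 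Passing to polar coordinates in $\bfxi'$ and then rescaling $r = a\widetilde{r}$, $w = a\widetilde{w}$, the total power of $a$ that factors out equals $a^{(d-1)+(p+1)-2\alpha} = a^{d+p-2\alpha} = a^{-sp}$, yielding
\begin{equation*}
J(\bx) \leq C_d \, a^{-sp} \, I(\rmM), \qquad I(\rmM) := \int_0^\infty r^{d-2} \int_{1-\rmM r}^\infty \frac{|w-1|^p}{(r^2 + w^2)^\alpha} \, \mathrm{d}w \, \mathrm{d}r.
\end{equation*}

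It remains to bound $I(\rmM)$ uniformly for $\rmM \in (0, \rmM_0)$, which I would carry out by splitting the inner $w$-integral into three regions. On $\{w \geq 1\}$ the integrand is independent of $\rmM$; the substitution $w = 1 + s$ followed by the change of variable $r = (1+s)\rho$ in the outer integral reduces this piece to a constant multiple of $\int_0^\infty s^p (1+s)^{-1-p-sp} \, \mathrm{d}s$, which is finite because $sp > 0$ and because $d \geq 1$ keeps $\int_0^\infty \rho^{d-2}(1+\rho^2)^{-\alpha}\,\mathrm{d}\rho$ finite. On $\{\max(0, 1-\rmM r) \leq w \leq 1\}$ one has $|w-1| \leq \rmM r$; I would further split this into $r \lesssim 1$, where the a priori bound $w \geq 1 - \rmM_0 > 0$ gives $(r^2 + w^2)^{\alpha} \geq c > 0$ and hence a contribution of order $\rmM^{p+1}$, and $r \gtrsim 1$, where $(r^2 + w^2)^{\alpha} \geq r^{2\alpha}$ and the exponent identity $d + p - 1 - 2\alpha = -1 - sp$ reduces the estimate to $\rmM^{p+1} \int_1^{1/\rmM} r^{-1-sp} \, \mathrm{d}r \leq C \rmM^{p+1}/(sp)$. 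The final region $\{1 - \rmM r \leq w \leq 0\}$ is only nonempty when $r > 1/\rmM$; there both $|w-1| \leq \rmM r$ and $(r^2 + w^2)^{\alpha} \geq r^{2\alpha}$, giving $\rmM^{p+1} \int_{1/\rmM}^\infty r^{-1-sp} \, \mathrm{d}r = C \rmM^{p+1+sp}/(sp)$.

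The main obstacle is the uniform-in-$\rmM$ bound on the middle region: a naive estimate using the lower bound $w \geq 1 - \rmM_0$ throughout its entire extent $r \in [0, 1/\rmM]$ generates a stray factor of $\rmM^{1-d}$ that diverges as $\rmM \to 0$ when $d \geq 2$. The remedy is precisely the separation into $r \lesssim 1$ (where the volume element $r^{d-2}\,\mathrm{d}r$ keeps the integral benign despite only the constant lower bound on the denominator) and $r \gg 1$ (where the exponent arithmetic built into the rescaling provides an integrable tail of the form $r^{-1-sp}$). Once each of the three pieces is bounded by a positive power of $\rmM$, summing gives $I(\rmM) \leq C(d, s, p, \rmM_0)$, and combining with $J(\bx) \leq C_d\, a^{-sp}\, I(\rmM)$ completes the proof.
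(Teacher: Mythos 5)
Your proposal is correct, and it takes a genuinely different and somewhat cleaner route than the paper. The paper substitutes $z_d = y_d - f(\by')$, which keeps $f(\by') - f(\bx')$ in \emph{both} numerator and denominator; it then splits the numerator into $|z_d|^p + |f(\by')-f(\bx')|^p$ (yielding two integrals $\mathrm{I}$, $\mathrm{II}$) and handles the denominator by rescaling $\bw' = (\bx'-\by')/|z_d + x_d - f(\bx')|$ and invoking the mean value theorem, which produces the quadratic $(1+\rmM^2)|\bw'|^2 - 2\rmM|\bw'|+1$ that must then be bounded below by $\max\{(1+\rmM^2)^{-1}, r^2/2\}$. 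The power $a^{-sp}$ only appears at the last step via $a = z_d/(x_d-f(\bx'))$. You instead change to $v = y_d - f(\bx')$, $\bfxi' = \by'-\bx'$, and use the Lipschitz bound $|g(\bfxi')|\leq \rmM r$ \emph{only} to enlarge the $v$-domain; this eliminates $f$ from the integrand entirely, so a single homogeneous rescaling by $a$ extracts $a^{-sp}$ at once and leaves a one-parameter dimensionless integral $I(\rmM)$. Your approach avoids both the MVT and the numerator splitting at the cost of a slightly more careful three-region analysis of $I(\rmM)$.

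Two small points worth tightening. First, the statement ``$w\geq 1-\rmM_0>0$'' on $r\lesssim 1$ presumes $\rmM_0<1$; for general $\rmM_0>0$ (as the lemma is stated), take the cutoff to be $r\leq 1/(2\rmM_0)$, so that $w\geq 1-\rmM r > 1/2$ uniformly in $\rmM<\rmM_0$, and the constant $(r^2+w^2)^\alpha \geq (1/2)^{2\alpha}$ is then $\rmM$-independent; the tail $r\in[1/(2\rmM_0), 1/\rmM]$ is then handled by your $r^{-1-sp}$ estimate as written (in the paper's applications $\rmM_0 = 3/5 < 1$, so this is only a matter of stating the lemma at full generality). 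Second, the change to polar coordinates $d\bfxi' = \omega_{d-2}r^{d-2}\,\mathrm{d}r$ and the convergence of $\int_0^\infty \rho^{d-2}(1+\rho^2)^{-\alpha}\,\mathrm{d}\rho$ near $\rho=0$ both use $d\geq 2$; the paper's proof has the same implicit restriction through its $\omega_{d-2}$, so this is consistent with the source. With these clarifications the argument is complete.
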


\begin{proof}
By adding and subtracting $f(\by')$ both in the numerator and denominator, and then making the substitution $z_d = y_d-f(\by')$, we obtain that 
\begin{equation}
\begin{split}
J(\bx) 
&= \intdm{D}{\frac{|y_d-f(\by') + f(\by') - f(\bx')|^p}{\left( |\bx'-\by'|^2 + |(y_d - f(\by')) + (f(\by')-f(\bx')) + (x_d-f(\bx'))|^2 \right)^{\frac{d+(s+1)p}{2}}} }{\by} \\
&= \iintdmt{0}{\infty}{\bbR^{d-1}}{\frac{|z_d + f(\by') - f(\bx')|^p}{\left( |\bx'-\by'|^2 + |z_d + (f(\by')-f(\bx')) + (x_d-f(\bx'))|^2 \right)^{\frac{d+(s+1)p}{2}}} }{\by'}{z_d} \\
&\leq 2^{p-1} (\mathrm{I} + \mathrm{II})\,,
\end{split}
\end{equation}
where
\begin{equation}
\begin{split}
\mathrm{I} &= \iintdmt{0}{\infty}{\bbR^{d-1}}{\frac{|z_d|^p}{\left( |\bx'-\by'|^2 + |z_d + (f(\by')-f(\bx')) + (x_d-f(\bx'))|^2 \right)^{\frac{d+(s+1)p}{2}}} }{\by'}{z_d}\,, \\
\mathrm{II} &= \iintdmt{0}{\infty}{\bbR^{d-1}}{\frac{|f(\by') - f(\bx')|^p}{\left( |\bx'-\by'|^2 + |z_d + (f(\by')-f(\bx')) + (x_d-f(\bx'))|^2 \right)^{\frac{d+(s+1)p}{2}}} }{\by'}{z_d}\,.
\end{split}
\end{equation}
We first bound $\mathrm{I}$. Letting $\bw' = \frac{\bx'-\by'}{|z_d+x_d-f(\bx')|}$ and using the MVT,
\begin{equation*}
\begin{split}
\mathrm{I} &= \intdmt{0}{\infty}{\frac{|z_d|^p}{|z_d+x_d-f(\bx')|^{d+(s+1)p}} \intdm{\bbR^{d-1}}{\frac{1}{\left( \left| \frac{\bx'-\by'}{|z_d+x_d-f(\bx')|} \right|^2 + \left( 1 + \frac{f(\by')-f(\bx')}{|z_d+x_d-f(\bx')|} \right)^2 \right)^{\frac{d+(s+1)p}{2}}}}{\by'} }{z_d} \\
&= \intdmt{0}{\infty}{\frac{|z_d|^p}{|z_d+x_d-f(\bx')|^{1+(s+1)p}} \intdm{\bbR^{d-1}}{\frac{1}{\left( \left| \bw' \right|^2 + \left( 1 - \grad f(\theta) \cdot \bw' \right)^2 \right)^{\frac{d+(s+1)p}{2}}}}{\bw'} }{z_d}\,,
\end{split}
\end{equation*}
where $\theta$ is on the line segment connecting $\bx'$ and $\bx' - |z_d+x_d-f(\bx')| \bw'$. Now,
\begin{equation*}
|1-\grad f(\theta) \cdot \bw'| \geq 1 - |\grad f(\theta) \cdot \bw'| \geq 1 - \rmM |\bw'|\,.
\end{equation*}
Thus,
\begin{equation}\label{eq:JBound1}
\begin{split}
\mathrm{I} &\leq \intdmt{0}{\infty}{\frac{|z_d|^p}{|z_d+x_d-f(\bx')|^{1+(s+1)p}} \intdm{\bbR^{d-1}}{\frac{1}{\left( \left| \bw' \right|^2 + \left( 1 - \rmM |\bw'| \right)^2 \right)^{\frac{d+(s+1)p}{2}}}}{\bw'} }{z_d} \\
&= \intdmt{0}{\infty}{\frac{|z_d|^p}{|z_d+x_d-f(\bx')|^{1+(s+1)p}} \intdm{\bbR^{d-1}}{\frac{1}{\left( (1+\rmM^2)|\bw'|^2 - 2 \rmM |\bw'| + 1 \right)^{\frac{d+(s+1)p}{2}}}}{\bw'} }{z_d} \\
\end{split}\,.
\end{equation}
We now write the second integral in polar coordinates, letting $r = |\bw'|$. Since $\textstyle (1+\rmM^2)r^2 - 2 \rmM r + 1 \geq \max \left\lbrace \frac{1}{1+\rmM^2}\,, \frac{1}{2} r^2 \right\rbrace$,
\begin{equation*}
\begin{split}
\mathrm{I} &\leq \omega_{d-2} \intdmt{0}{\infty}{\frac{|z_d|^p}{|z_d+x_d-f(\bx')|^{1+(s+1)p}} }{z_d} \left( \intdmt{0}{1}{\frac{r^{d-2}}{\left( 1+\rmM^2 \right)^{-\frac{d+(s+1)p}{2}}}}{r} + \intdmt{1}{\infty}{\frac{r^{d-2}}{\left( \frac{r^2}{2} \right)^{\frac{d+(s+1)p}{2}}}}{r} \right) \\
&\leq \omega_{d-2} \left( \frac{(1+\rmM^2)^{\frac{d+(s+1)p}{2}}}{d-1} + \frac{2^{\frac{d+(s+1)p}{2}}}{1+(s+1)p} \right) \intdmt{0}{\infty}{\frac{|z_d|^p}{|z_d+x_d-f(\bx')|^{1+(s+1)p}} }{z_d}\,.
\end{split}
\end{equation*}
Therefore, making the coordinate change $\textstyle a = \frac{z_d}{x_d - f(\bx')}$ in the integral on the previous line, for any $\rmM<\rmM_0$, 
\begin{equation*}
\mathrm{I} \leq C \left( \intdmt{0}{\infty}{\frac{a^p}{|a+1|^{1+sp+p}}}{a} \right) \frac{1}{|x_d - f(\bx')|^{sp}}\,,
\end{equation*}
where $C$ independent of $\rmM$ but  depends on $\rmM_0$, $d$, $s$, and $p$. We have therefore obtained the desired bound for $\mathrm{I}$ since the integral converges absolutely.

The bound for $\mathrm{II}$ follows similarly; the bound analogous to \eqref{eq:JBound1} is
\begin{equation}\label{eq:JBound2}
\mathrm{II} \leq \intdmt{0}{\infty}{\frac{1}{|z_d-x_d-f(\bx')|^{1+sp}}}{z_d} \intdm{\bbR^{d-1}}{\frac{\rmM^p |\bw'|^p}{\left( (1+\rmM^2)|\bw'|^2 - 2 \rmM |\bw'| + 1 \right)^{\frac{d+(s+1)p}{2}}}}{\bw'}\,.
\end{equation}
Using the same lower bound on $(1+\rmM^2)|\bw'|^2 - 2 \rmM |\bw'| + 1$, we proceed just as we did for $\mathrm{I}$; the second integral in \eqref{eq:JBound2} remains finite despite the presence of $|\bw'|^p$ in the numerator. Thus $\mathrm{II} \leq C \rmM^p \,  |x_d-f(\bx')|^{-sp}$, where the constant $C$ is independent of $\rmM$ but depends on $\rmM_0$, $d$, $s$, and $p$.
\end{proof}


\end{document}